\newtheorem{theorem}{Theorem}
\newtheorem{lemma}{Lemma}
\DeclareMathOperator{\esssup}{ess sup}
\DeclareMathOperator{\card}{card}
\DeclareMathOperator{\dens}{dens}
\DeclareMathOperator{\spa}{span}
\DeclareMathOperator{\sign}{sign}
\newcommand{\R}{\mathbb{R}}
\newcommand{\N}{\mathbb{N}}
\newcommand{\x}{\mathbf{x}}
\newcommand{\y}{\mathbf{y}}
\newcommand{\z}{\mathbf{z}}
\newcommand{\bu}{\mathbf{u}}
\newcommand{\M}{\mathcal{M}}
\title{\textbf{Optimal recovery of functions determined by second-order differential operators}}
\author{Bo Ling\footnote{School of mathematics and statistics, Xidian University, China(boling@xidian.edu.cn).},\,\, Yi Gu\footnote{School of mathematics and statistics, Yunnan University,  China(guyi@ynu.edu.cn, corresponding author).}}
\date{\today}
\begin{document}

\maketitle
	
\begin{abstract}
We study the optimal recovery problem for isotropic functions defined by second-order differential operators using both function  and gradient values. We derive the upper bound for $n$-th optimal error  with an explicit constant, which is independent of the specific form of the differential operators. Furthermore, for self-adjoint operators,  we obtain asymptotic exact results for the n-th optimal error.
\end{abstract}

\section{Introduction}
A fundamental task in optimal recovery is to determine how well  a function  in
a given space can be recovered from discrete information, which is usually given by function and/or derivative values(samples)\cite{MR584446,novak2010tractability}. The results of optimal error analysis provide a theoretical basis and benchmark for numerous numerical computation problems.

  Various results on the optimal recovery of functions have been documented, and we present a selection of the most pertinent references   to this study. In the one-dimensional  case, Koneichuk \cite{MR616672,MR616223} and Bojanov \cite{MR0393937} considered functions $f$ with bounded $r$-th derivatives and provided exact recovery results using function and derivative values at the knots. In the multivariate case, the optimal recovery of isotropic classes with $r$-th derivatives   was considered in \cite{MR2735421,2014Optimal,2018Optimal}, yielding exact or almost exact  results.
    
In this paper, we discuss  optimal recovery of isotropic functions $ W^{P(D)}_\infty(\Omega)$  defined on a convex domain $\Omega$ and  generated by a second-order differential operator $P(D)=D^2+p D+q$ using function  and gradient values at specific knots.The detailed definition of the function class $W^{P(D)}_\infty(\Omega)$  is provided in Section 2.1.  Let $\xi$ be a discrete subset of $\Omega,$ and   denote the information operator associated to $\xi$  by       
$I_{\xi} (f)= \{f(\x),  \nabla f(\x): \x\in \xi \} $  for $f\in W^{P(D)}_\infty(\Omega)$, where  $\nabla =(\frac{\partial }{\partial x_1}, \cdots  , \frac{\partial }{\partial x_d}  )$ is the gradient operator. 
A mapping  $\phi: I_\xi(W) \to C(\Omega) $ is called an algorithm.
The worst-case error of algorithm $\phi$ over the class $W^{P(D)}_\infty(\Omega)$ is defined by
\[R\left(W^{P(D)}_\infty(\Omega), \xi,\phi\right) := \sup_{f\in W^{P(D)}_\infty(\Omega)} \| f- \phi\circ I_\xi (f) \|_{\infty}. \]
The optimal error  using   function and gradient value at $\xi$ is defined as 
\begin{equation}\label{eq:R=radius} 
	R\left(W^{P(D)}_\infty(\Omega), \xi\right) = \inf_{\phi } \sup_{f\in W^{P(D)}_\infty(\Omega)} \|  f- \phi\circ I_\xi (f)    \|_{\infty}
\end{equation}
where   $\phi^\ast$  that attains the infimum on the right-hand side above  is called the optimal algorithm.

By the convexity and balance of $ W^{P(D)}_\infty$ and the linearity of information operator $I_\xi,$  it is deduced  from \cite{MR958691}(Section 5, Chapter 4)  that   the central algorithm is  optimal.
Consequently, the optimal error of the recovery problem  transforms into a multivariate extremal problem, as formulated below:  
\begin{equation}\label{ext-0}
	R\left(  W^{P(D)}_\infty,  \xi\right)=R\left(  W^{P(D)}_\infty,  \xi; \phi^c\right)=  \sup_{\substack{f\in   W^{P(D)}_\infty,\\ I_\xi(f)=0  }}\|f\|_{\infty}.
\end{equation}
Here $\phi^c$ denotes the central algorithm, defined as:
\[ \phi^c(w)(\x):=\frac{ 1}{2} \bigg\{ \sup_{\substack{f\in   W^{P(D)}_\infty,\\ I_\xi(f)=w}}  f(\x)+ \inf_{\substack{f\in  W^{P(D)}_\infty,\\ I_\xi(f)=w}} f(\x) \bigg\}. \]

The primary objective of this paper is to determine the n-th optimal error, denoted as
\begin{equation}\label{eq:app Rn}
	R_n( W^{P(D)}_\infty) := \inf_{\xi: \card(\xi) \leq n} R( W^{P(D)}_\infty,   \xi ),
\end{equation}	
and identify the optimal nodes set $\xi^\ast$ that achieves the infimum on the right-hand side of the equation.

  For $P(D)=D^2 $ and $D^3$  the  asymptotic exact results for $n$-th optimal error were shown  in \cite{MR2735421,2014Optimal}. We extend these results to   the general case of $P(D)=D^2+pD+q.$ Of particular interest is that the asymptotic behavior is  independent of the coefficients $p$ and $q.$

Firstly, we  derive an explicit   upper bound for $n$-th optimal error $R_n(W^{P(D)}_{\infty}(\Omega))$.     
\begin{theorem}\label{thm:n recovery upper}
	Let $\Omega\subset \R^d$ be a bounded and convex body, and $P(D)=D^2+p D+q$ be a second-order differential operator with constant coefficients $p,q\in \R$.   Then
	\begin{equation*}
		R_n(W^{P(D)}_\infty(\Omega)) 
		\leq     \frac{1}{4 }\left(\frac{\dens(d) \mu_d(\Omega)}{\nu_d n
		}  \right)^{2/d}(1+o(1)) \quad  \mbox{as} \,\, n\to \infty. 
	\end{equation*}
	where  $\mu_d(\Omega)$ is the volume of $\Omega$, $\nu_d$ is the volume of unit ball in $\R^d,$ and the constant $\dens(d)$ is the least density of sphere covering of $\R^d$ as defined in (\ref{eq:dens(d)}). 
\end{theorem}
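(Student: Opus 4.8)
The plan is to prove the bound by constructing an explicit near-optimal node set and recovery scheme and then reducing the worst-case error to a univariate extremal problem whose leading constant is $\tfrac14$. Since (\ref{ext-0}) identifies $R(W^{P(D)}_\infty,\xi)$ with the extremal quantity $\sup\{\|f\|_\infty : f\in W^{P(D)}_\infty,\ I_\xi(f)=0\}$, it suffices to exhibit one admissible $\xi$ with $\card(\xi)\le n$ for which this supremum is at most $\tfrac14 r^2(1+o(1))$, where $r=\bigl(\dens(d)\mu_d(\Omega)/(\nu_d n)\bigr)^{1/d}$.

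First I would take $\xi$ to be the centers of an asymptotically optimal covering of $\R^d$ by balls of radius $r$, intersected with $\Omega$. By the definition of $\dens(d)$ the number of such balls meeting $\Omega$ is $\bigl(\dens(d)\mu_d(\Omega)/(\nu_d r^d)\bigr)(1+o(1))$ as $r\to0$, and equating this count with $n$ fixes $r$ as above; the discrepancy in the count is supported in an $O(r)$-neighborhood of $\partial\Omega$, whose volume is negligible relative to $\mu_d(\Omega)$ because $\Omega$ is a bounded convex body, and this is where the first $o(1)$ enters. With this $\xi$, every $\x\in\Omega$ lies within distance $r$ of $\xi$, and in a good covering it is in fact straddled along some line by two covering points at distances summing to at most $2r$.

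The heart of the argument is the local estimate. Fixing $f\in W^{P(D)}_\infty$ with $I_\xi(f)=0$ and a point $\x$, I would restrict $f$ to a line $\ell$ through $\x$ meeting two nodes $\x_0,\x_1\in\xi$ on opposite sides of $\x$. Along $\ell$ the univariate restriction $g(t)=f(\x_0+t\bu)$ satisfies the Hermite conditions $g=g'=0$ at the two nodes and is constrained by $|P(D)g|\le1$. Because the principal part of $P(D)=D^2+pD+q$ is $D^2$ and the node spacing is $O(r)\to0$, the lower-order terms $pD+q$ perturb the governing equation only at order $o(r^2)$, so the controlling extremal problem is the bounded-second-derivative Hermite problem on an interval of length $\le 2r$. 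Its extremal is the quadratic perfect spline (Euler-type spline), whose value at an interior point is at most $(2r)^2/16=\tfrac14 r^2$; this is exactly where the constant $\tfrac14$ originates, and it makes transparent why the answer is independent of $p$ and $q$. Quantifying this perturbation and confirming it affects only the $(1+o(1))$ factor is the second source of the error term.

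The main obstacle, I expect, is this local perturbation analysis together with showing that a single admissible covering simultaneously realizes the optimal density $\dens(d)$ and supplies, for every interior $\x$, a straddling pair of nodes at distance $\le r$, so that the univariate value $\tfrac14 r^2$ is attained uniformly rather than the weaker single-node bound $\tfrac12 r^2$. Once the local estimate $|f(\x)|\le\tfrac14 r^2(1+o(1))$ holds uniformly over $\x\in\Omega$ and over all $f$ with $I_\xi(f)=0$, taking the supremum and then the infimum over $\xi$ with $\card(\xi)\le n$ yields the stated bound; the two $o(1)$ contributions, namely the boundary waste in the covering count and the lower-order-term perturbation, combine into the single factor $(1+o(1))$.
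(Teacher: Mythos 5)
Your proposal has a genuine gap, and it is exactly the one you flag yourself: the ``straddling'' property. A covering of $\Omega$ by $n$ balls of radius $r$ guarantees only that every point has \emph{one} node within distance $r$; it does not supply, for every $\x\in\Omega$, two nodes lying \emph{on a common line through} $\x$, on opposite sides, with distances summing to at most $2r$. This stronger property fails already for the optimal hexagonal covering of the plane: a deep hole (vertex of a Voronoi cell) is at distance $r$ from its three nearest centers, which sit at $120^\circ$ from one another, and no two centers within distance $O(r)$ are collinear with it. The same failure occurs for lattices in any dimension (a generic point lies on no short segment between two lattice points), and you give no alternative construction realizing both optimal covering density and universal straddling. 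Without it, your own fallback estimate is the one-node bound $\tfrac12 r^2$, which is twice the claimed constant, so the proof as proposed does not close. (Your treatment of the lower-order terms $pD+q$ as an $o(r^2)$ perturbation is also only sketched, but that part is repairable; the paper avoids it by solving the univariate extremal problems exactly via Green's functions in Lemma \ref{lemma1} and then invoking the asymptotics (\ref{asy G}).)

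The idea you are missing is how the paper gets the constant $\tfrac14$ with only \emph{one} node per point: apply the information conditions at the \emph{maximum point} of $f$. If $f\in W^{P(D)}_\infty(\Omega)$ with $I_\xi(f)=0$ and $\x_0$ is an interior maximizer of $|f|$, then $\nabla f(\x_0)=0$, so the restriction $h$ of $f$ to the segment from $\x_0$ to its nearest node $\y$ (Lemma \ref{lem:multi-uni}) satisfies $h'(0)=0$ together with $h(a)=h'(a)=0$, $a=|\x_0-\y|\le e(\Omega,\xi)$. The extremal problem (\ref{ext-2}) with these three conditions has value $G(a)-2G\left(g^{-1}\left(\tfrac12 g(a)\right)\right)=\tfrac{a^2}{4}(1+o(1))$, which is where $\tfrac14$ comes from — no second, collinear node is needed. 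This leaves the case of a maximizer on $\partial\Omega$, where the gradient need not vanish and only the weaker bound $G(a)\approx\tfrac{a^2}{2}$ holds; the paper handles it by augmenting the $(n-k_n)$-centers of $\Omega$ with $k_n=o(n)$ extra nodes forming a $\theta e_n$-net of an $e_n$-neighborhood of $\partial\Omega$, with $\theta<1/\sqrt2$ fixed, so that $G\left(e(\partial\Omega,\xi_n^\ast)\right)\le\tfrac{\theta^2}{2}e(\Omega,\xi_n^\ast)^2(1+o(1))$ is dominated by the interior term (Theorem \ref{thm:upper-recovery}). Your plan addresses neither the interior mechanism nor this boundary correction, and both are essential to reach the stated constant.
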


Furthermore, if the differential operator is self-adjoint, i.e.,  the first-order coefficient  $p=0$, we can derive the exact result  for $n$-th optimal error   $R_n(W^{P(D)}_{\infty}(\Omega)).$ 
\begin{theorem}\label{thm:n recovery exact}
	Let $\Omega\subset \R^d$ be a bounded and convex body, and $P(D)=D^2+ q$ be  a second-order differential operator with constant coefficients $ q\in \R$.   Then
	\begin{equation*}
		R_n(W^{P(D)}_\infty(\Omega)) 
		=  \frac{1}{4 }\left(\frac{\dens(d) \mu_d(\Omega)}{\nu_d n
		}  \right)^{2/d}(1+o(1)) \quad  \mbox{as} \,\, n\to \infty. 
	\end{equation*}
	where  $\mu_d(\Omega)$ is the volume of $\Omega,$  $\nu_d$ is the volume of unit ball in $\R^d,$ and the constant $\dens(d)$ is the least density of sphere covering of $\R^d$ as defined in (\ref{eq:dens(d)}). 
\end{theorem}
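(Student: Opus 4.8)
The plan is to prove the lower bound
\[
R_n\!\left(W^{P(D)}_\infty(\Omega)\right)\ \ge\ \frac14\left(\frac{\dens(d)\,\mu_d(\Omega)}{\nu_d\,n}\right)^{2/d}(1+o(1)),\qquad n\to\infty,
\]
since the matching upper bound is exactly Theorem~\ref{thm:n recovery upper} specialised to $p=0$. By the extremal reformulation (\ref{ext-0}) it suffices, for each admissible $\xi$ with $\card(\xi)\le n$, to produce a single $f\in W^{P(D)}_\infty$ with $I_\xi(f)=0$ (that is, $f$ and $\nabla f$ vanishing on $\xi$) and $\|f\|_\infty$ close to the claimed value; the bound for $R_n$ then follows because the estimate will be uniform in $\xi$.

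First I would fix the length scale $\rho_n:=\big(\dens(d)\mu_d(\Omega)/(\nu_d n)\big)^{1/d}$ and argue that the nodes cannot be too dense. If the covering radius $\max_{\x\in\Omega}\operatorname{dist}(\x,\xi)$ were smaller than $\rho_n(1-\varepsilon)$, then the $n$ balls of that radius centred at the nodes would cover $\Omega$ with density strictly below the minimal covering density $\dens(d)$ of (\ref{eq:dens(d)}), which is impossible for large $n$. Hence for every $\xi$ there is a point $\x_0\in\Omega$ whose distance $\rho:=\operatorname{dist}(\x_0,\xi)$ to the node set satisfies $\rho\ge\rho_n(1+o(1))$, and the open ball $B(\x_0,\rho)$ contains no node.

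The core of the argument is the local construction on $B(\x_0,\rho)$. I would take $f$ radial, $f(\x)=G(|\x-\x_0|)$, with $G$ supported in $[0,\rho]$ and $G(\rho)=G'(\rho)=0$, so that $f$ and $\nabla f$ vanish on $\partial B(\x_0,\rho)$ and hence on every node. Crucially, admissibility in the isotropic class forces regularity at the centre, i.e.\ $G'(0)=0$ (otherwise the tangential second derivative $G'(r)/r$ blows up as $r\to0$). Under the three constraints $G'(0)=G'(\rho)=0$, $G(\rho)=0$ and a unit bound on the radial curvature, the extremal profile has $G''=-1$ on $[0,\rho/2]$ and $G''=+1$ on $[\rho/2,\rho]$, whence $G(0)=\rho^2/4$; it is precisely the centre condition $G'(0)=0$ that pins the constant to $1/4$ rather than $1/2$. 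Self-adjointness enters here: for $p=0$ the operator governing the restriction of $f$ to any line, namely $d^2/dt^2+q$, is invariant under reversing the line's orientation, so the symmetric radial extremizer satisfies $|P(D)f|\le1$ along every directed line and genuinely lies in $W^{P(D)}_\infty$. Finally the zeroth-order term contributes only $G''+qG=\mp1+q\,O(\rho^2)$, a multiplicative perturbation $1+O(\rho^2)$ absorbed by rescaling, so $\|f\|_\infty=G(0)=\tfrac14\rho^2(1+o(1))$.

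Combining the two steps gives, for every $\xi$ with $\card(\xi)\le n$,
\[
R\!\left(W^{P(D)}_\infty,\xi\right)\ \ge\ \|f\|_\infty\ \ge\ \tfrac14\rho_n^2(1+o(1)),
\]
and since the right-hand side is independent of $\xi$ the same lower bound holds for $R_n$, matching Theorem~\ref{thm:n recovery upper}. I expect the main obstacle to be the local construction of the preceding paragraph: verifying that the radial extremizer meets \emph{all} of the directional constraints in the definition of the isotropic class from Section~2.1 (not merely the radial one), and that the $q$-perturbation together with the covering-density estimate are uniform enough to leave the constant $\tfrac14\,(\dens(d)\mu_d(\Omega)/(\nu_d n))^{2/d}$ untouched. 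This is exactly the point at which $p=0$ is indispensable; for $p\ne0$ the restriction operator $d^2/dt^2+p\,d/dt+q$ changes under orientation reversal, the symmetric radial extremizer is no longer admissible along every line, and the method yields only the one-sided estimate of Theorem~\ref{thm:n recovery upper}.
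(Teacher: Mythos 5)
Your overall architecture matches the paper's: the upper bound is Theorem~\ref{thm:n recovery upper}, and the lower bound comes from planting a radial bump, centered at a point of $\Omega$ far from $\xi$, whose value and gradient vanish on $\xi$, combined with the asymptotics of the covering radius. Two of your choices differ from the paper in acceptable ways. First, you re-derive the covering-radius asymptotics by a density argument instead of citing Lemma~\ref{lem:e-n}; note, however, that $\dens(d)$ is defined via coverings of all of $\R^d$, so the claim that ``covering $\Omega$ with density below $\dens(d)$ is impossible'' is not automatic --- it is precisely the content of the Kolmogorov--Tihomirov theorem, which you should cite rather than sketch. Second, you use the $q=0$ extremal profile rescaled by $\bigl(1+O(\rho^2)\bigr)^{-1}$ instead of the exact extremizer $\tilde h$ for $D^2+q$ (the paper's route via Theorem~\ref{thm:exact-recovery}); this perturbative shortcut is legitimate for an asymptotic statement and avoids the paper's case-by-case explicit formulas.

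The genuine gap is the one you yourself flag: the admissibility of the radial bump in the isotropic class, i.e.\ the verification that $\bigl|P(\frac{\partial}{\partial \mathbf u})f\bigr|\le 1$ for \emph{every} unit vector $\mathbf u$, not only the radial one. Your sole argument here --- that for $p=0$ the restriction operator $d^2/dt^2+q$ is invariant under orientation reversal, ``so'' the bound holds along every line --- is a non sequitur: symmetry says nothing about lines that miss the center. Concretely, for $f(\x)=G(|\x-\x_0|)$, at a point at distance $r$ from $\x_0$ and a direction $\mathbf u$ whose cosine with the radial direction is $\lambda$, one has
\begin{equation*}
P\Bigl(\tfrac{\partial}{\partial \mathbf u}\Bigr)f
=\lambda^2\Bigl[G''(r)+qG(r)\Bigr]+(1-\lambda^2)\Bigl[\tfrac{G'(r)}{r}+qG(r)\Bigr],
\end{equation*}
so besides the radial constraint $|G''+qG|\le 1$ you must check the \emph{tangential} constraint $\bigl|\tfrac{G'(r)}{r}+qG(r)\bigr|\le 1$; this verification is exactly what the paper's Lemma~\ref{lem:uni-multi} supplies, and it is the heart of Section~4. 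Your construction can in fact be completed: for the piecewise parabola, $G'(r)/r=-1$ on $[0,\rho/2]$ and lies in $(-1,0]$ on $[\rho/2,\rho]$, so the displayed convex combination is bounded by $1+|q|\rho^2/4$ and your rescaling absorbs it. But as written, the decisive step is replaced by a symmetry remark that does not do the work. (Where $p=0$ genuinely enters is also simpler than your explanation: for $p\neq 0$ the term $\lambda p\,G'(r)$ appears above and, since both $\mathbf u$ and $-\mathbf u$ are admissible directions, it cannot be controlled uniformly in $\lambda$.)
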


We organize this paper as follows. Section 2 introduces   necessary notation and presents relevant preliminary results concerning optimal sphere coverings, Green's functions and univariate extremum problems.  Theorems 1  and 2 are proved in Section 3 and 4, respectively.

\section{Preliminaries}

\subsection{Notations}
 Denote $B(\x,r)$ the open ball  and  $B[\x,r]$ the closed ball   of radius $r$ centered at $\x.$    
Given   a set $E \subset \R^d$,  we denote by $\mu_d(E)$ the Lebesgue measure of $E$ in $\R^d$, and $\card(E)$ the cardinality of  the finite set $E$. Furthermore, we define
$e(\x,E)=\inf_{\y\in E} |\x-\y|$ as the distance from a point $\x\in \R^d$ to a set $E\subset \R^d,$  and $e(X,E)=\sup_{x\in X}  e(\x,E)$  
as the one-sided Hausdorff distance of $X\subset \R^d$ to $E\subset \R^d.$

Let $P(D)=D^2+p D+q$ be a second-order differential operator with constant coefficients $p,q\in \R$.  Let $\Omega  \subset \mathbb R^d$  denote a convex body, i.e., a convex closed set with non-empty interior. Let $C^k(\Omega)$ be the space
of $k$-times  continuously differentiable functions on $\Omega$.   Let  $W^{P(D)}_\infty(\Omega)$ be the set of all the functions $f: \Omega\rightarrow \R$   such that the directional derivative $\frac{\partial^2 f}{\partial \mathbf u^2}$ exists  for every unit vector $\mathbf u\in \mathbb R^d$  inside $\Omega$ in the generalized sense and $\| P(\frac{\partial   }{\partial \mathbf u } ) f  \|_\infty \leq 1.$ Here,  the norm $\|\cdot\|_\infty$ denotes the essential supremum norm, i.e.,
$\|f \|_\infty:=\esssup_{x\in \Omega} |f(x)|.$ 

The existence of the generalized derivative $\frac{\partial^2 f}{\partial \mathbf u^2}$ implies that $f\in C^{1} (\Omega)$ and, for almost every straight line $l$ parallel to $\mathbf u$ and passing through the interior of $\Omega,$  the restriction of $\frac{\partial  f}{\partial \mathbf u}$ to the intersection of $l$ with  $\Omega$ is locally absolutely continuous and $\frac{\partial^2 f}{\partial \mathbf u^2}$ is measurable. 
Specially, for the univariate case,   $W^{P(D)}_\infty[a,b]$ denotes the set of all functions $h$ such that $h'$ is absolutely continuous and $\| P(\frac{d   }{d t} )  h (t)\|_{L_\infty[a,b]} \leq 1.$

\subsection{Optimal Sphere Covering  }

The sphere covering problem focuses on determining the most economical ways to cover $\R^d$ with equally sized balls. We will demonstrate that this problem is intricately linked to the optimal recovery problem we previously considered.  
A  covering  of $\R^d$ comprises a countable collection $\M$ of   balls with identical radii, whose union equals $\R^d.$ 
The   covering density of $\M$ is defined as follows:
\begin{equation*}
	\dens \M := \liminf_{R \to \infty}  \frac{\sum_{B\in \M}  \mu_d(B\cap [-R,R]^d  )  }{ \mu_d([-R,R]^d) }.
\end{equation*}
The quantity
\begin{equation}\label{eq:dens(d)} 
	\dens(d):= \inf\{ \dens(\M): \M \mbox{ covers}  \R^d   \}
\end{equation}
is termed the least density of sphere covering of $\R^d$, and 	
the covering $\M^\ast$ that attains the infimum in Equation (\ref{eq:dens(d)})
is referred to as the optiaml sphere covering of $\R^d.$  The set of centers of the optimal sphere covering is denoted by   $ \xi^\ast=\xi^{\ast,d}.$

Identifying the optimal sphere covering of $\R^d$
is an intriguing and significant problem. 
Kershner was the first to consider this problem, demonstrating in \cite{MR0000043} that the hexagonal lattice provides the optimal sphere covering in the planar case. Since then, the lattice covering problem (where the centers of spheres form a lattice) has been solved up to dimension 5 \cite{MR1662447}. However, for dimensions $d\ge 6$, even the optimal lattice covering remains unknown.

For   a bounded subset $\Omega$ of $\R^d,$ 
define
\begin{equation}\label{eq:e-n for omega}
	e_n(\Omega):= \inf \{ e(\Omega,\xi):   \card \xi\leq n, \xi\subset \R^d\},
\end{equation}
which is called the  $n$-covering radius  of $\Omega.$ If 
$\xi^\ast$  attains the
infimum on the right-hand side of (\ref{eq:e-n for omega}), we refer to $\xi^\ast$ as an
$n$-centers  for $\Omega.$    Lemma
10.3\cite{MR1764176} guarantees  the existence of $n$-centers  for a non-empty compact
set $\Omega.$  Consequently, $n$-centers exist for  a convex body $\Omega \subset \R^d$.

Given that
\[  e(\Omega,\xi)=\max_{\y\in \Omega} \min_{\x\in \xi}
|\y-\x|=\min\{\lambda\geq 0: \Omega\subset \bigcup_{\x\in \xi} B(\x,\lambda) \}, \]
it implies    
\[e_n(\Omega ) = \inf\{\lambda:   \Omega  \subset \bigcup_{i=1}^n B[\x_i,\lambda], \x_1,\ldots,\x_n \in \R^d  \}.\]
Thus, seeking   $n$-centers for  $\Omega$  is equivalent to solving the geometric problem of finding the optimal sphere covering of $\Omega$.

Kolmogorov and  Tihomirov  presented the asymptotic exact values of $e_n(\Omega)$ in   Theorem IX\cite{MR0112032} as follows. 
\begin{lemma}\label{lem:e-n}
	For every compact set $\Omega\subset \R^d$ with $\mu_d(\Omega)>0$ and $\mu_d(\partial \Omega)=0,$ it holds that
	\[  e_n(\Omega) =  \bigg(\frac{\dens(d) \mu_d(\Omega)}{n \nu_d } \bigg)^{1/d}  (1+o(1)), \; \mbox{ as } n\to \infty, \]
	where  $\nu_d$ is the volume of unit ball in $\R^d$.
\end{lemma}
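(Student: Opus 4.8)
The plan is to recast the covering radius in terms of a covering number and prove a two-sided asymptotic that matches the claimed density. For $\lambda>0$ let $N(\Omega,\lambda)$ denote the minimal number of closed balls of radius $\lambda$ whose union contains $\Omega$. By the reformulation recorded just before the lemma, $e_n(\Omega)\le\lambda$ if and only if $N(\Omega,\lambda)\le n$, so it suffices to establish
\[ \lim_{\lambda\to 0}\; \frac{\nu_d\,\lambda^d\,N(\Omega,\lambda)}{\mu_d(\Omega)}=\dens(d), \]
and then to substitute $\lambda=e_n(\Omega)$, $n=N(\Omega,e_n(\Omega))$ and invert. I would prove the two inequalities $\limsup\le\dens(d)$ and $\liminf\ge\dens(d)$ separately.

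For the upper bound I would fix $\epsilon>0$ and take a covering $\M$ of $\R^d$ by unit balls whose density does not exceed $\dens(d)+\epsilon$; since density is scale invariant, the dilation $\x\mapsto\lambda\x$ turns it into a covering $\M_\lambda$ of $\R^d$ by balls of radius $\lambda$ with the same density. Only the balls of $\M_\lambda$ that meet $\Omega$ are retained, and a ball of radius $\lambda$ meets $\Omega$ exactly when its center lies in the $\lambda$-neighbourhood $\Omega_\lambda:=\{\x:\mathrm{dist}(\x,\Omega)\le\lambda\}$. The number of retained balls, multiplied by the common ball volume $\nu_d\lambda^d$, is controlled by $\dens(\M)$ times the volume of $\Omega_\lambda$; the cleanest way to make this rigorous is to average the count over all translates of $\M_\lambda$ (or to start from a periodic near-optimal covering, which the infimum defining $\dens(d)$ permits), so that the average number of balls meeting $\Omega$ equals the center density $\dens(\M)/(\nu_d\lambda^d)$ times $\mu_d(\Omega_\lambda)$, and then to select a translate attaining at most the average. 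Since $\mu_d(\partial\Omega)=0$ forces $\mu_d(\Omega_\lambda)\to\mu_d(\Omega)$ as $\lambda\to0$, this yields $N(\Omega,\lambda)\le(\dens(d)+\epsilon)\mu_d(\Omega)/(\nu_d\lambda^d)\,(1+o(1))$, and letting $\epsilon\to0$ gives $\limsup\le\dens(d)$.

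For the lower bound the key elementary observation is an exact per-cube inequality: if an axis-parallel cube $Q$ is covered by $m$ balls of radius $\lambda$, then tiling $\R^d$ by translated copies of $Q$, each carrying the correspondingly translated covering, produces a periodic covering of $\R^d$ whose density is exactly $m\nu_d\lambda^d/\mu_d(Q)$ (each period cell contributes $m$ balls of volume $\nu_d\lambda^d$ to the density sum, and overlaps between balls are irrelevant to that sum). As $\dens(d)$ is the least density, $m\ge\dens(d)\mu_d(Q)/(\nu_d\lambda^d)$, i.e. $N(Q,\lambda)\ge\dens(d)\mu_d(Q)/(\nu_d\lambda^d)$ for every cube and every $\lambda$. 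To transfer this to $\Omega$ I would exhaust $\Omega$ from the inside by finitely many cubes $Q_1,\dots,Q_M\subset\mathrm{int}\,\Omega$ that are pairwise separated by distance greater than $2\lambda$ and satisfy $\sum_i\mu_d(Q_i)\ge(1-\epsilon)\mu_d(\Omega)$; this is possible because $\mu_d(\partial\Omega)=0$ and $\mu_d(\Omega)>0$, while as $\lambda\to0$ the mandated gaps cost only vanishing volume. The separation guarantees that no ball of radius $\lambda$ meets two distinct $Q_i$, so a covering of $\Omega$ by $n$ balls restricts to disjoint subfamilies covering the individual cubes, whence $n\ge\sum_i N(Q_i,\lambda)\ge\dens(d)(1-\epsilon)\mu_d(\Omega)/(\nu_d\lambda^d)$. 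Letting $\epsilon\to0$ gives $\liminf\ge\dens(d)$.

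Combining the two bounds proves the displayed limit, and inverting it (with $\mu_d(\Omega)>0$ controlling the exponent) yields the asymptotic formula for $e_n(\Omega)$. I expect the main obstacle to be the lower bound: a single finite covering of the bounded set $\Omega$ carries no intrinsic density, whereas $\dens(d)$ is defined only through a $\liminf$ over growing boxes of \emph{infinite} coverings, and near $\partial\Omega$ a finite covering can be locally more economical than any admissible global one. The tiling/periodization device for cubes, together with the inner cube exhaustion licensed by $\mu_d(\partial\Omega)=0$, is precisely what bridges this gap; controlling the inter-cube gaps and the neighbourhood $\Omega_\lambda$ uniformly as $\lambda\to0$ is the technical heart of the argument.
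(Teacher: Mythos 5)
Your proposal is correct, but note that the paper does not prove this lemma at all: it is quoted as Theorem IX of Kolmogorov and Tihomirov \cite{MR0112032}, so what you have written is a reconstruction of the cited result rather than a parallel to any argument in the text. Your route is essentially the classical one: reduce $e_n(\Omega)$ to the covering number $N(\Omega,\lambda)$, prove $\nu_d\lambda^d N(\Omega,\lambda)/\mu_d(\Omega)\to\dens(d)$ as $\lambda\to 0$, and invert. Both halves are sound. The periodization device for the lower bound is exactly right: tiling $\R^d$ by translated copies of a covered cube yields a legitimate covering whose density is exactly $m\nu_d\lambda^d/\mu_d(Q)$, since the density functional sums ball volumes and is blind to overlaps; and your inner cube exhaustion uses $\mu_d(\partial\Omega)=0$ precisely where it is genuinely needed. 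One quantifier-order point should be made explicit there: fix $\epsilon$ first, choose finitely many cubes in $\mathrm{int}\,\Omega$ with a \emph{fixed} mutual separation $s>0$ and total volume at least $(1-\epsilon)\mu_d(\Omega)$, and only then restrict to $\lambda<s/2$, so the gaps need not be re-chosen as $\lambda\to0$. On the upper bound, two details deserve care in a full write-up: first, $\dens(\M)$ is defined by a $\liminf$, so the averaging of the number of balls meeting a translate of $\lambda^{-1}\Omega$ must be carried out over boxes $[-R_j,R_j]^d$ along a sequence realizing that $\liminf$ (this suffices, since only one good translate is required); second, your alternative shortcut --- that the infimum defining $\dens(d)$ "permits" periodic near-optimal coverings --- is true but is itself a theorem whose proof is this very averaging/patching argument, so it should not be invoked as a substitute for it. A minor simplification: $\mu_d(\Omega_\lambda)\to\mu_d(\Omega)$ already follows from compactness, since $\Omega_\lambda$ decreases to the closed set $\Omega$; the hypothesis $\mu_d(\partial\Omega)=0$ is not needed in the upper bound. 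Finally, the inversion from the limit in $\lambda$ to the asymptotics of $e_n$ is fine, using monotonicity of $e_n$, $e_n\to0$, and the attainment of the infimum by $n$-centers that the paper quotes from Graf and Luschgy \cite{MR1764176}.
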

We will observe that the  asymptotic exact value of the $n$-th optimal error $R_n(W^{P(D)}_\infty(\Omega))$ is closely related to $e_n(\Omega).$

\subsection{Green's Functions of Second-order Differential Equations }
Let $P(D)=D^2+p D+q$  be a second-order differential operator  with constant coefficients $p,q\in \R.$ Differential operators $P(D)$ are categorized into three types based on their characteristic roots: (1) $(D-\alpha)^2$ with $\alpha\in \R;$ (2) $ (D-\alpha)(D-\beta)$ with $\alpha<\beta, \alpha,\beta\in \R;$ (3) $ D^2 +2\alpha D +(\alpha^2 +\beta^2)$ with $\alpha\in \R$ and $\beta>0.$

The solution to the boundary value problem
\begin{align*}
	\begin{cases}
		P(\frac{d}{dt}) f (t)= \varphi(t),  \quad t\in [0,a],\\
		f(a)=0, f'(a)=0,	 	
	\end{cases}
\end{align*}  
is    given by     $f(t)= \int_0^a  G(t,\tau) \varphi(\tau) d\tau,$ where $ G(t,\tau)$ is the  Green's function  for  the boundary value problem.   We can express  $ G(t,\tau) = g\left((\tau-t)_+\right),$ where $(t)_+=\max\{t,0\} $  and $g(t)$ is defined  by 
\begin{equation}\label{eq-g}
	g ( t)=\begin{cases}
		t e^{-\alpha t}, & P(D)=(D-\alpha I)^2, \\	
		\frac{1}{\beta-\alpha} \left[e^{-\alpha t} -e^{-\beta t} \right], &  P(D)= (D-\alpha I) (D-\beta I),\\ 
		\frac{1}{\beta }  e^{-\alpha t}  \sin\beta t , & P(D)=P(D)= D^2 +2\alpha D +(\alpha^2 +\beta^2)I. 	
	\end{cases}	    
\end{equation}

Its derivative  is 
\begin{equation*}
	g'(t)=\begin{cases}
		[1-\alpha t]e^{-\alpha t}, & P(D)=(D-\alpha I)^2, \\	
		\frac{1}{\beta-\alpha} \left[\beta e^{-\beta t} -\alpha e^{-\alpha t} \right], &  P(D)= (D-\alpha I) (D-\beta I),\\ 
		\frac{1}{\beta }  e^{-\alpha t}  [\beta \cos \beta t- \alpha\sin\beta t ], & P(D)=P(D)= D^2 +2\alpha D +(\alpha^2 +\beta^2)I. 	
	\end{cases}	    
\end{equation*}
It is straightforward to verify  that $g(0)=0, g'(0)=1$ and $g$ is strictly increasing in the interval $[0,\delta)$  where
\begin{equation}\label{eq-delta}
	\delta =\begin{cases}
		+\infty,   & P(D)=(D-\alpha I)^2, \alpha\leq 0, \\
		\frac{1}{\alpha},   & P(D)=(D-\alpha I)^2, \alpha> 0, \\	
		+\infty, &  P(D)= (D-\alpha I) (D-\beta I), \beta\leq 0,\\	
		\frac{1}{\beta}, &  P(D)= (D-\alpha I) (D-\beta I), \beta> 0,\\ 
		\frac{1}{\beta}\arctan \frac{\beta}{\alpha}, & P(D)=  D^2 +2\alpha D +(\alpha^2 +\beta^2)I,\alpha>0,\\
		\frac{1}{\beta}\frac{\pi}{2}, 	& P(D)= D^2 +2\alpha D +(\alpha^2 +\beta^2)I,\alpha\leq 0. 
	\end{cases}	    
\end{equation}

For the convenience of later discussion, let  $G(t)$  be the antiderivative of $g(t)$ with $G(0)=0,$ that is,  
\begin{equation}\label{eq-G}
	G ( t)=\begin{cases}
		-\frac{(\alpha t+1)}{\alpha^2}  e^{-\alpha t}  + \frac{1}{\alpha^2}  (\alpha \neq 0) \quad \mbox{or} \quad \frac{t^2}{2} (\alpha = 0), & P(D)=(D-\alpha I)^2,   \\	
		\frac{1}{\beta-\alpha} \left[\frac{e^{-\beta t}}{\beta} -\frac{e^{-\alpha t}}{\alpha} \right]+\frac{1}{\alpha \beta}, &  P(D)= (D-\alpha I) (D-\beta I),\\ 
		\frac{1}{\beta } \frac{-\alpha \sin \beta t -\beta \cos\beta t}{\alpha^2+\beta^2} e^{-\alpha t} +\frac{1}{\alpha^2+\beta^2}   , & P(D)=P(D)= D^2 +2\alpha D +(\alpha^2 +\beta^2)I. 	
	\end{cases}	    
\end{equation} 
It is straightforward to verify that $G(0)=G'(0)=0, G''(0)=1,$ and $G$ is strictly increasing and  convex  on $[0,\delta).$

\subsection{Some Univariate Extremal Results}
In the sequel, we always convert the multivariate extremal problem (\ref{ext-0})   into a univariate  one. The following two univariate extremal problems are involved:
\[ \sup\{ |h(0)| :  h\in W^{P(D)}_{\infty}[0,a],  h(a)=h'(a)=0   \},\] 
and
\[ 	\sup\{ |h(0)| :  h\in W^{P(D)}_{\infty}[0,a],  h(a)=h'(a)=0, h'(0)=0  \}.
\]

For the three types of differential opertors, the above extraml problems   are solved   in the following lemma. 

\begin{lemma}\label{lemma1}
	For  $a\in (0,\delta),$ where $\delta$ is defined as (\ref{eq-delta}),  it holds that
	\begin{equation}\label{ext-1}
		\sup\left\{|h(0)|:  \|P(D)h\|_\infty\leq 1,h(a) =h'(a)=0\right\}    = G (a),  
	\end{equation} 
	\begin{equation}\label{ext-2}
		\sup\left\{|h(0)|: \|P(D)h\|_\infty\leq 1,h(a) = h'(a)=  h'(0)=0\right\}   	   =    G(a)-2G(t_0),   	
	\end{equation} 
	where $G $ and  $g$  is defined in (\ref{eq-G}) and (\ref{eq-g}), respectively, and $ t_0 = g^{-1}\left(\frac{1}{2}g(a)\right)  $	with $g^{-1}$ being the inverse function of $g.$
 Furthermore, 	$G(a) -  2 G\left( g^{ -1 } (\frac{1}{2}g(a) )  \right)  $ is increasing with respect to the variable $a$. 
\end{lemma}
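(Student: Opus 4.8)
The plan is to reduce both extremal problems to linear optimization over the control $\varphi := P(D)h$ (with $D=\tfrac{d}{dt}$), handling everything through the Green's function of Section~2.3. Since $h(a)=h'(a)=0$, that representation gives $h(t)=\int_t^a g(\tau-t)\varphi(\tau)\,d\tau$; differentiating under the integral and using $g(0)=0$, $g'(0)=1$ yields
\[ h(0)=\int_0^a g(\tau)\varphi(\tau)\,d\tau, \qquad h'(0)=-\int_0^a g'(\tau)\varphi(\tau)\,d\tau. \]
The membership $h\in W^{P(D)}_\infty[0,a]$ is then exactly $\|\varphi\|_\infty\leq 1$, setting up a correspondence between admissible $h$ and admissible controls $\varphi$. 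I will use throughout that $a<\delta$ forces $g>0$ on $(0,a]$ and $g'>0$ on $[0,a]$, both read off from (\ref{eq-g}). Since the admissible controls are symmetric under $\varphi\mapsto-\varphi$, maximizing $|h(0)|$ is the same as maximizing $h(0)=\int_0^a g\varphi$.

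For (\ref{ext-1}) there is no constraint beyond $|\varphi|\leq 1$, so $g\geq 0$ gives $h(0)\leq \int_0^a g=G(a)$ immediately, attained at $\varphi\equiv 1$ (that is, $P(D)h\equiv 1$). For (\ref{ext-2}) the added condition $h'(0)=0$ reads $\int_0^a g'\varphi=0$, and I propose the bang-bang candidate $\varphi^\ast=-1$ on $[0,t_0)$ and $\varphi^\ast=+1$ on $(t_0,a]$. Integrating $g'$ against $\varphi^\ast$ gives $g(a)-2g(t_0)$, so the constraint forces $g(t_0)=\tfrac12 g(a)$, i.e. $t_0=g^{-1}(\tfrac12 g(a))$ as stated; integrating $g$ against $\varphi^\ast$ then gives $h(0)=G(a)-2G(t_0)$, the claimed value.

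The substance of (\ref{ext-2}) is the matching upper bound, for which I use duality with the multiplier $\lambda:=g(t_0)/g'(t_0)$. Because $\int_0^a g'\varphi=0$ for every admissible control, $\int_0^a g\varphi=\int_0^a(g-\lambda g')\varphi\leq\int_0^a|g-\lambda g'|$. To identify this dual value with $G(a)-2G(t_0)$ and certify that $\varphi^\ast$ attains it, I must show $g-\lambda g'$ changes sign exactly once on $[0,a]$, namely negative on $[0,t_0)$ and positive on $(t_0,a]$; since $g'>0$, this is equivalent to $g/g'$ being strictly increasing and meeting the level $\lambda$ precisely at $t_0$. This monotonicity is the crux of the lemma and I expect it to be the main obstacle. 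It amounts to $(g')^2-gg''>0$, which I will confirm by a direct computation from (\ref{eq-g}): in the three cases $(g')^2-gg''$ equals $e^{-2\alpha t}$, $e^{-(\alpha+\beta)t}$, and $e^{-2\alpha t}$ respectively, each strictly positive on $[0,\delta)$. With the single sign change in hand, $\varphi^\ast=\sign(g-\lambda g')$ almost everywhere, the inequality $\int_0^a g\varphi\leq\int_0^a|g-\lambda g'|=G(a)-2G(t_0)$ is sharp, and (\ref{ext-2}) follows.

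For the final monotonicity claim I will differentiate $\Phi(a):=G(a)-2G(t_0(a))$ directly. Using $G'=g$ together with $g(t_0)=\tfrac12 g(a)$ (hence $2g(t_0)=g(a)$, and, after differentiating, $t_0'(a)=g'(a)/(2g'(t_0))$), the terms collapse to $\Phi'(a)=g(a)\bigl(1-g'(a)/(2g'(t_0))\bigr)$. As $g(a)>0$, it remains to check $g'(a)\leq 2g'(t_0)$; this once more follows from $g/g'$ increasing, since $t_0\leq a$ gives $g(t_0)/g'(t_0)\leq g(a)/g'(a)$, and substituting $g(t_0)=\tfrac12 g(a)$ and clearing the positive denominators yields exactly $g'(a)\leq 2g'(t_0)$. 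Thus $\Phi'(a)\geq 0$, which closes the lemma.
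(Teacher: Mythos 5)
Your proposal is correct, and it reaches the paper's result by an essentially parallel but more self-contained route. The skeleton is the same as the paper's: reduce to the control $\varphi=P(\tfrac{d}{dt})h$ via the Green's function, get $G(a)$ for (\ref{ext-1}) from $g\ge 0$, and exhibit the bang-bang extremizer $\varphi_0(\tau)=\sign(\tau-t_0)$ with $g(t_0)=\tfrac12 g(a)$ for (\ref{ext-2}). The difference is in how the matching upper bound in (\ref{ext-2}) is certified. The paper cites the dual theory of extremal problems (Proposition 1.4.1 in \cite{MR1124406}) to pass to the best $L_1$-approximation $\min_c\|g-cg'\|_1$, and then cites a Chebyshev-system characterization of best $L_1$-approximants (Theorem 2.4-5 in \cite{Sun}) to locate the single sign change at $t_0$, with the Chebyshev property of $\{g',g\}$ left as ``easy to verify''. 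You instead use weak duality with the explicit multiplier $\lambda=g(t_0)/g'(t_0)$ and prove the single-sign-change property directly from the Wronskian identity $(g')^2-gg''>0$; your three case values $e^{-2\alpha t}$, $e^{-(\alpha+\beta)t}$, $e^{-2\alpha t}$ are correct (indeed $W=(g')^2-gg''$ satisfies $W'=-pW$, $W(0)=1$, where $p$ is the first-order coefficient of the ODE annihilating $g$, so $W=e^{-pt}$). That computation is, in substance, a proof of the Chebyshev property the paper asserts without verification, so your route trades two external citations for one explicit calculation. You also prove the final monotonicity claim---which the paper states in the lemma and uses later in the proof of Theorem \ref{thm:n recovery exact}, but whose proof it omits entirely---by differentiating $G(a)-2G(t_0(a))$ and reusing the monotonicity of $g/g'$; this is correct (an even quicker alternative: any admissible $h$ on $[0,a]$ extends by zero to an admissible function on $[0,a']$ for $a'>a$, so the supremum is nondecreasing in $a$). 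The one step you leave at the paper's own level of informality is the claim that $g>0$ on $(0,a]$ and $g'>0$ on $[0,a]$ when $a<\delta$, which does hold but requires the same case-by-case inspection of (\ref{eq-g}) and (\ref{eq-delta}) that the paper itself waves through.
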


\begin{proof}
	The proof procedures for the three cases are similar, so  we only prove the first case   in detail  and provide a brief explanation for the other two cases.
	
	For $P(D)=(D-\alpha I)^2,$   the solution to the boundary value problem 
	$  P(\frac{d}{dt}) h(t) =\varphi(t),   h(a)=h'(a)=0      $ 
	is   
	$  h(t) = \int_0^a  g\left((\tau -t)_+ \right) \varphi(\tau)  d\tau.    $
	Thus we have $h(0)=\int_0^a  g(\tau) \varphi(\tau)  d\tau.$  
	
	The solution to the extremal problem  (\ref{ext-1})  is 
	\begin{equation*} 
		\sup_{\|P(\frac{d}{dt})h\|_\infty\leq 1;h(a) =h'(a)=0}  |h(0)|   = \sup_{\|\varphi\|_\infty\leq 1}    \left|  \int_0^a   g(\tau) \varphi(\tau)  d\tau    \right| 	\nonumber 
	 =     \|  g \|_1 = G(a).
	\end{equation*}
	where the last equality holds because  $g$ is non-negative and   $G(0)=0$.

	We now turn to the second extremal problem (\ref{ext-2}). The additional condition $h'(0)=0$
	is converted to
	$ \int_0^a g'(\tau)  \varphi(\tau )  d\tau=0.  $ 
	By the dual theory of extremal problem (Proposition 1.4.1\cite{MR1124406}), we have
	\begin{align}\label{eq-l1}
		\sup_{\|P(\frac{d}{dt})h\|_\infty\leq 1;h(a) =h'(a)=0; h'(0)=0}  |h(0)| &= \sup_{\|\varphi\|_\infty\leq 1;  h'(0)=0}    |  \int_0^a    g(\tau) \varphi(\tau)  d\tau    |  \nonumber \\
		 &= \sup_{\|\varphi\|_\infty\leq 1;   \varphi \perp g'}    |  \int_0^a    g(\tau) \varphi(\tau)  d\tau    |  \nonumber \\
		&= \min_{c} \|     g(\tau) - c  \cdot  g'(\tau)   \|_1, 
	\end{align}
	 where the last term is  the best approximation of $ g(\tau) $ from the one-dimensional space $ \spa\{g'(\tau) \}$ in the $L_1$ norm.

Since $g$ is a  strictly increasing function on $[0,a]$ with $g(0)=0,$    there exists a unique $t_0\in (0,a)$ such that $\int_0^a   \sign(\tau -t_0) \cdot   g'(\tau)  d\tau =0,$   where  $ t_0 = g^{-1} \left( \frac{1}{2} g(a)\right)$ and $\sign t$ is the sign function with $\sign t=1$ for $t\geq 0$ and $\sign t=-1$ for $t<0.$ Let $\varphi_0(\tau):=  \sign (\tau-t_0), $ so that $ \varphi_0\perp g'.$

	It is easy to verify that  $\{    g'(\tau) ,     g(\tau)   \}$ forms a Chebyshev system on the interval $[0,a]$.  By Theorem 2.4-5 in \cite{Sun}, the best $L_1$ approximant $c_0 g'$  in (\ref{eq-l1}) exists,  and   $  g - c_0     g'$ changes signs at $t_0,$ i.e.,  $  \sign  [  g(\tau) - c_0    g'(\tau)     ]=  \sign( \tau-t_0).$ Therefore, 
\begin{equation}\label{ba}
\min_{c\in \R} \|     g  - c  \cdot  g'    \|_1 = \int_0^a | g(\tau) - c_0    g'(\tau)  | d\tau = \int_0^a \left(g(\tau) - c_0    g'(\tau)  \right) \varphi_0(\tau) d\tau =\int_0^a g(\tau) \varphi_0(\tau) d\tau.  
\end{equation} 
	
Let $\tilde h (t) := \int_0^a g\left((\tau -t)_+ \right) \varphi_0(\tau)  d\tau.$ 		
Then$\|P(\frac{d}{dt}) \tilde h\|_\infty= \|  \varphi_0\|_\infty \leq 1,    \tilde h (a)= \tilde h' (a)=0,$ and $ \tilde h'(0)=0.$ By the previous results (\ref{eq-l1}) and (\ref{ba}),  
the solution to the extremal problem (\ref{ext-2}) is 	
\begin{equation*}\label{extremal2.1}
	\sup_{\|P(D)h\|_\infty\leq 1;h(a) =h'(a)=0; h'(0)=0}  |h(0)| 	 = \tilde h(0) 
	=    G(a)-2G(t_0).   	
\end{equation*}
This completes the proof of Lemma \ref{lemma1}.  
\end{proof}

Moreover,   the explicit form of  $\tilde h(t)$ is given by
\begin{equation}\label{eq-h0}
	\tilde h(t)=\begin{cases}
		G(a-t)-2G(t_0-t) ,& 0\leq t\leq t_0,\\  	
		G(a-t), & t_0<t\leq a, \\
		0,& t>a.  	
	\end{cases} 	
\end{equation} 
If $P(D)=D^2,$ we obtain  $g(t)=t,$       $\displaystyle G(t)=\frac{t^2}{2},$ and  $\displaystyle t_0=\frac{a}{2}.$  Consequently, the right-hand side of (\ref{ext-1}) and (\ref{ext-2}) equals $\frac{a^2}{2}$ and $\frac{a^2}{4}$ respectively. 

In general, $t_0=g^{-1}\left(\frac{1}{2}g(a)\right)$ is related to the function $g$.  However, it holds that  $\displaystyle\lim\limits_{a\to 0} \frac{ t_0}{a}=\frac{1}{2}$ due to $g'(0)=1.$ Furthermore, as $a\to 0^+,$ we have
\begin{equation}\label{asy G}	
G(a)=\frac{a^2}{2} (1+o(1) ), \quad 	G(a)-2G(t_0) =\frac{a^2}{4} (1+o(1) ).
\end{equation}   
It should be emphasized that  this exact asymptotic result is independent of the types of  the second differential operator $P(D).$

In Section 4, some explicit forms of $\tilde h$ are needed for specific differential operators. For $P(D)=D^2,$ 
\[ \tilde h (t)=\begin{cases}
	\frac{a^2}{2}-\frac{t^2}{2}, & t\in [0,\frac{a}{2}],\\
	\frac{(a-t)^2}{2},& t\in (\frac{a}{2},a].	
\end{cases}   \]

For $P(D)=D^2-\beta^2,$ 
\begin{equation}\label{eq-h0-2}
\tilde h (t)=\begin{cases}
	\frac{1}{\beta^2} \left[ \cosh \beta(a-t) -2 \cosh \beta(t_0-t)  +1  \right], & t\in [0,t_0],\\
	\frac{1}{\beta^2} \left[ \cosh \beta(a-t) -1  \right],& t\in (t_0,a],	
\end{cases}    	 
\end{equation} 
where $t_0$ satisfies $ 2\sinh \beta t_0 =\sinh \beta a.$

For $P(D)=D^2+\beta^2,$ 
\begin{equation}\label{eq-h0-3}  \tilde h (t)=\begin{cases}
	\frac{1}{\beta^2} \left[ -\cos  \beta(a-t) +2 \cos  \beta(t_0-t) -1  \right], & t\in [0,t_0],\\
	\frac{1}{\beta^2} \left[   1- \cos \beta(a-t)  \right],& t\in (t_0,a],	
\end{cases}   
\end{equation} 
where $t_0$ satisfies $ 2\sin  \beta t_0 =\sin  \beta a.$

\section{Proof of the Explicit Upper Bound}

The following lemma demonstrates that   the restriction of a function  $f\in W^{P(D)}_{\infty}(\Omega)$  to any straight line  remains a univariate function in   $W^{P(D)}_{\infty}[0,a].$
\begin{lemma}\label{lem:multi-uni}
Let $\Omega\subset \R^d$ be a convex body, and $P(D)=D^2+p D+q$ be a second-order differential operator with constant coefficients $p,q\in \R$. 
	If $f\in W^{P(D)}_{\infty}(\Omega),$ then for any two distinct points $\x,\y\in \Omega,$ the function
	\[g(t)=f(\x+t \frac{\y-\x}{|\y-\x|}) \]
	belongs to $W^{P(D)}_{\infty}[0,a],$ where $a=|\y-\x|.$
\end{lemma}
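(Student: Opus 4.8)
The plan is to reduce the multivariate condition defining $W^{P(D)}_\infty(\Omega)$ to the univariate condition defining $W^{P(D)}_\infty[0,a]$ by exploiting that $P(D)$ acts only through a single directional derivative. Writing $\bu=\frac{\y-\x}{|\y-\x|}$, we have $g(t)=f(\x+t\bu)$ for $t\in[0,a]$ with $a=|\y-\x|$, and the whole segment lies in $\Omega$ by convexity. Since the existence of the generalized second directional derivative forces $f\in C^{1}(\Omega)$, the chain rule gives $g\in C^{1}[0,a]$ with $g'(t)=\frac{\partial f}{\partial\bu}(\x+t\bu)$. The crucial algebraic observation is that, wherever the second derivative is available,
\[ P\Big(\tfrac{d}{dt}\Big)g(t)=g''(t)+p\,g'(t)+q\,g(t)=\Big(\tfrac{\partial^{2} f}{\partial\bu^{2}}+p\tfrac{\partial f}{\partial\bu}+qf\Big)(\x+t\bu)=\Big(P\big(\tfrac{\partial}{\partial\bu}\big)f\Big)(\x+t\bu). \]
Thus the lemma is equivalent to the two assertions: (i) $g'$ is absolutely continuous on $[0,a]$, and (ii) the displayed identity holds for almost every $t$, so that $\|P(\tfrac{d}{dt})g\|_{L_\infty[0,a]}\le\|P(\tfrac{\partial}{\partial\bu})f\|_{L_\infty(\Omega)}\le 1$.

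If the prescribed segment $\{\x+t\bu\}$ happened to lie on one of the ``good'' lines of the definition — those along which the restriction of $\frac{\partial f}{\partial\bu}$ is absolutely continuous — then (i) and (ii) would be immediate: $g'$ would be absolutely continuous with $g''(t)=\frac{\partial^{2} f}{\partial\bu^{2}}(\x+t\bu)$ a.e., and the bound would follow directly from $\|P(\tfrac{\partial}{\partial\bu})f\|_\infty\le 1$. The main obstacle is precisely that the regularity supplied by the definition holds only for \emph{almost every} line parallel to $\bu$, whereas the lemma must hold for one fixed line $l_{0}$.

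I would bridge this gap by a transverse compactness argument. Let $H$ be the hyperplane through $\x$ orthogonal to $\bu$, and for $\mathbf{w}\in H$ set $g_{\mathbf{w}}(t)=f(\x+\mathbf{w}+t\bu)$. By Fubini together with the cited consequence of the definition, for almost every $\mathbf{w}$ the function $g_{\mathbf{w}}'$ is absolutely continuous and $h_{\mathbf{w}}(t):=P(\tfrac{d}{dt})g_{\mathbf{w}}(t)$ satisfies $\|h_{\mathbf{w}}\|_{L_\infty}\le\|P(\tfrac{\partial}{\partial\bu})f\|_{L_\infty(\Omega)}\le 1$ (Fubini sends the $\Omega$-null set $\{|P(\tfrac{\partial}{\partial\bu})f|>1\}$ to a one-dimensional null set on almost every line). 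Choose such good points $\mathbf{w}_{k}\to\mathbf{0}$. Because $f\in C^{1}(\Omega)$, both $g_{\mathbf{w}_{k}}\to g$ and $g_{\mathbf{w}_{k}}'\to g'$ uniformly on $[0,a]$, while the uniform bound $\|h_{\mathbf{w}_{k}}\|_{L_\infty}\le 1$ lets me extract, by weak-$\ast$ compactness of the unit ball of $L_\infty[0,a]$, a subsequence with $h_{\mathbf{w}_{k}}\rightharpoonup^{\ast}h_{\ast}$ and $\|h_{\ast}\|_\infty\le 1$. Passing to the limit in the integrated identity $g_{\mathbf{w}_{k}}'(t)-g_{\mathbf{w}_{k}}'(0)=\int_{0}^{t}\big[h_{\mathbf{w}_{k}}-p\,g_{\mathbf{w}_{k}}'-q\,g_{\mathbf{w}_{k}}\big]\,ds$ — the left side by uniform convergence, the $h_{\mathbf{w}_{k}}$-term by testing the weak-$\ast$ limit against the indicator of $[0,t]$, and the remaining terms by uniform convergence — yields $g'(t)-g'(0)=\int_{0}^{t}[h_{\ast}-p\,g'-q\,g]\,ds$. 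This shows at once that $g'$ is absolutely continuous and that $P(\tfrac{d}{dt})g=h_{\ast}$ a.e. with $\|P(\tfrac{d}{dt})g\|_{L_\infty[0,a]}=\|h_{\ast}\|_\infty\le 1$, which is exactly $g\in W^{P(D)}_\infty[0,a]$. An equivalent route replaces the good parallel lines by the transverse mollifications $\int_{H}\rho_{\epsilon}(\mathbf{w})\,g_{\mathbf{w}}\,d\mathbf{w}$, whose derivatives inherit the same uniform $L_\infty$ bound and converge uniformly to $g$ and $g'$.
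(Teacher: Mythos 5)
Your core strategy is sound and, in fact, more careful than the paper's own proof: the paper simply asserts that $g'$ is absolutely continuous ``by the definition of $W^{P(D)}_{\infty}(\Omega)$,'' without engaging the fact that the definition supplies absolute continuity only along \emph{almost every} line parallel to $\bu$; you correctly identify this as the crux, and weak-$\ast$ compactness along nearby good lines is the right tool to close it. However, your execution has a genuine gap at the step ``choose good points $\mathbf{w}_k\to\mathbf{0}$ \dots\ both $g_{\mathbf{w}_k}\to g$ and $g_{\mathbf{w}_k}'\to g'$ uniformly on $[0,a]$.'' The translated segment $\{\x+\mathbf{w}+t\bu:\,t\in[0,a]\}$ need not lie in $\Omega$ when $\mathbf{w}\neq\mathbf{0}$, so $g_{\mathbf{w}}$ need not even be defined on all of $[0,a]$. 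Concretely, take $\Omega=B[\0,1]$ and $\x,\y$ antipodal boundary points: every line parallel to $[\x,\y]$ meets $\Omega$ in a strictly shorter chord, so there is \emph{no} $\mathbf{w}\neq\mathbf{0}$ for which your functions are defined on the full parameter interval, and your integrated identity (which uses $g_{\mathbf{w}_k}'(0)$ and integrals starting at $0$) cannot even be written down. This is not a fringe case: the lemma allows $\x,\y\in\Omega$ with $\Omega$ closed, and the paper applies it precisely to segments whose initial point $\x_0$ may lie on $\partial\Omega$ (the boundary case in the proof of Theorem 3) and whose endpoint is a node in $\xi$, possibly also on the boundary.

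The gap is repairable with one more limiting layer, after which your proof is complete. Fix an interior point $\z_0$ of $\Omega$ and, for $\lambda\in(0,1)$, consider the contracted segment $t\mapsto(1-\lambda)(\x+t\bu)+\lambda\z_0$, $t\in[0,a]$: it is parallel to $\bu$, lies in the interior of $\Omega$ (a convex combination of a point of $\Omega$ with an interior point is interior), and hence has a closed tubular neighborhood inside $\Omega$, so your transverse compactness argument applies to it verbatim. The reparametrized functions $\tilde g_\lambda(t):=f\left((1-\lambda)(\x+t\bu)+\lambda\z_0\right)$ then satisfy $\|P(\frac{d}{dt})\tilde g_\lambda\|_{L_\infty[0,a]}\leq 1+C\lambda$ (reparametrization scales the second-derivative term by $(1-\lambda)^2$ and the first-derivative term by $(1-\lambda)$, and these derivatives are uniformly bounded near the segment), while $\tilde g_\lambda\to g$ and $\tilde g_\lambda'\to g'$ uniformly on $[0,a]$ because $f\in C^1(\Omega)$. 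A second application of your weak-$\ast$ limit and integrated-identity argument, now in $\lambda$, gives $g'$ absolutely continuous and $\|P(\frac{d}{dt})g\|_{L_\infty[0,a]}\leq\liminf_{\lambda\to 0}(1+C\lambda)=1$. With this repair, your argument is a rigorous version of what the paper leaves implicit.
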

\begin{proof}
	Let $\bu= \frac{\y-\x}{|\y-\x|}.$  Since  $f\in C^1(\Omega),$ it follows that
	\begin{equation*} 
		g'(t) =(\bu\cdot \nabla)  f(\x+ t \mathbf u ) =\frac{\partial }{\partial \mathbf u} f(\x+ t \mathbf u), \; t\in [0,a].
	\end{equation*}
Furthermore, $g'(t)$ is absolutely continuous by the definition of $ W^{P(D)}_{\infty}(\Omega) $ and 
	\[ g''(t)= \frac{\partial^2 }{\partial^2 \mathbf u} f(\x+ t \mathbf u),\quad  \mbox{almost everywhere } \quad  t\in[0,a].      \]
	Therefore,  $   P(\frac{d}{dt})g(t) =  P(\frac{\partial }{\partial \mathbf u}) f(\x+ t \mathbf u)  $
	and $\|  P(\frac{d}{dt})g  \|_{L_\infty[0,a]}\leq  \|P(\frac{\partial }{\partial \mathbf u}) f \|_{\infty} \leq 1,$  
	which implies that $g\in W^{P(D)}_{\infty}[0,a].$
\end{proof}	

By lemma \ref{lem:multi-uni}, we can   utilize univariate extremal results to derive an upper bound of multivariate    extremal problem  in (\ref{ext-0}).

\begin{theorem}\label{thm:upper-recovery}
	Let  $\Omega \subset \R^d$ be a bounded and convex body, and $\xi\subset \Omega$ a finite set of nodes such that   $e(\Omega,\xi)<\delta,$ where $\delta$ is defined in (\ref{eq-delta}). Let $P(D)=D^2+p D+q$  be a second-order differential operator with constant coefficients $p,q\in \R$.  Then, the central algorithm $\phi^c$ is  the optimal algorithm for recovery problem $R(W^{P(D)}_\infty(\Omega),\xi)$ and
	\begin{align*}
		R(W^{P(D)}_\infty(\Omega), \xi) &=R(W^{P(D)}_\infty(\Omega), \xi;\phi^c)    \nonumber\\ &\leq \frac 12  \max\bigg\{ G\left(  e (  \Omega,\xi)  \right) - 2G\left(  g^{-1}\left(\frac{g(e (  \Omega,\xi))}{2}\right)  \right) ,G\left(  e (\partial \Omega,\xi)  \right) \bigg\};
	\end{align*}
	if further $G(  e (\partial \Omega,\xi)  ) \leq G(  e (  \Omega,\xi)  ) - 2G\left(  g^{-1}\left(\frac{g(e (  \Omega,\xi))}{2}\right)  \right),$ it holds
	\begin{equation}\label{eq:xi recovery}
		R(W^{P(D)}_\infty(\Omega), \xi) \leq \frac 12G(  e (  \Omega,\xi)  ) -  G\left(  g^{-1}\left(\frac{g(e (  \Omega,\xi))}{2}\right)  \right),
	\end{equation}
	where functions $g$ and $G$  are defined in (\ref{eq-g}) and (\ref{eq-G}), respectively.
\end{theorem}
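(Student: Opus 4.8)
The plan is to begin from the reduction already recorded in (\ref{ext-0}): since $W^{P(D)}_\infty(\Omega)$ is convex and balanced and the information operator $I_\xi$ is linear, the central algorithm $\phi^c$ is optimal and the worst-case error equals the extremal quantity $\sup\{\|f\|_\infty : f\in W^{P(D)}_\infty(\Omega),\ I_\xi(f)=0\}$. It therefore suffices to bound $\|f\|_\infty$ for an arbitrary $f\in W^{P(D)}_\infty(\Omega)$ that vanishes together with its gradient on $\xi$. Because $f\in C^1(\Omega)$ and $\Omega$ is compact, $|f|$ attains its maximum at some $\z^\ast\in\Omega$, and the whole argument is organized around evaluating $f$ at this extremal point and then collapsing the problem to a univariate Hermite problem along a carefully chosen segment via Lemma \ref{lem:multi-uni}.

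First I would treat the case where $\z^\ast$ lies in the interior of $\Omega$. Then $\z^\ast$ is an interior extremum of $f$, so $\nabla f(\z^\ast)=\0$. Let $\x_1\in\xi$ be a nearest node, put $a:=|\z^\ast-\x_1|\le e(\Omega,\xi)<\delta$, and note that by convexity the segment $[\z^\ast,\x_1]$ lies in $\Omega$. Restricting $f$ to this segment and invoking Lemma \ref{lem:multi-uni} produces $h\in W^{P(D)}_\infty[0,a]$ with $h(0)=f(\z^\ast)$, $h'(0)=\bu\cdot\nabla f(\z^\ast)=0$ (where $\bu$ is the unit direction from $\z^\ast$ to $\x_1$), and $h(a)=f(\x_1)=0$, $h'(a)=\bu\cdot\nabla f(\x_1)=0$. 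These are precisely the constraints of the extremal problem (\ref{ext-2}), so Lemma \ref{lemma1} gives $|f(\z^\ast)|=|h(0)|\le G(a)-2G\big(g^{-1}(\tfrac12 g(a))\big)$, and the monotonicity of $a\mapsto G(a)-2G(g^{-1}(\tfrac12 g(a)))$ asserted in Lemma \ref{lemma1} lets me enlarge $a$ up to $e(\Omega,\xi)$.

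Next, if $\z^\ast\in\partial\Omega$ the gradient need not vanish, so I would again take a nearest node, now at distance $a\le e(\partial\Omega,\xi)\le e(\Omega,\xi)<\delta$, and repeat the restriction, this time retaining only the two endpoint conditions $h(a)=h'(a)=0$. This is the setting of (\ref{ext-1}), which yields $|f(\z^\ast)|\le G(a)\le G(e(\partial\Omega,\xi))$ by monotonicity of $G$. Taking the larger of the two case bounds and inserting the central-algorithm prefactor produces the claimed inequality; moreover, when the boundary term is dominated by the interior term the maximum collapses, giving the refined estimate (\ref{eq:xi recovery}).

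The hard part will be the interior case, and specifically the strategic choice to evaluate at the global maximizer $\z^\ast$ of $|f|$ rather than at an arbitrary point: only there does the gradient vanish, and it is exactly the extra constraint $h'(0)=0$ that upgrades (\ref{ext-1}) to the sharper (\ref{ext-2}) and supplies the improving term $-2G(g^{-1}(\tfrac12 g(a)))$. Justifying $\nabla f(\z^\ast)=\0$ needs only $f\in C^1(\Omega)$, which is built into the definition of $W^{P(D)}_\infty(\Omega)$; the remaining work is bookkeeping, namely confirming that each restricted function stays in the admissible range $a<\delta$ (guaranteed by the hypothesis $e(\Omega,\xi)<\delta$ together with convexity keeping every segment inside $\Omega$) and that the interior/boundary dichotomy exhausts the possible location of $\z^\ast$.
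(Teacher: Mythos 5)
Your proposal reproduces the paper's proof essentially step for step: the same reduction via (\ref{ext-0}) to the extremal problem $\sup\{\|f\|_\infty : I_\xi(f)=0\}$, the same choice of a maximizer $\z^\ast$ of $|f|$, the same interior/boundary dichotomy exploiting $\nabla f(\z^\ast)=\0$ at an interior maximum, and the same applications of Lemma \ref{lem:multi-uni} followed by (\ref{ext-2}) (interior case) and (\ref{ext-1}) (boundary case), with the monotonicity statements used to pass from $a$ to $e(\Omega,\xi)$ and $e(\partial\Omega,\xi)$.

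One caveat, on the only step you left vague. Writing $A := G\left(e(\Omega,\xi)\right) - 2G\left(g^{-1}\left(\tfrac12 g(e(\Omega,\xi))\right)\right)$ and $B := G\left(e(\partial\Omega,\xi)\right)$, your case analysis establishes $\sup\{\|f\|_\infty : I_\xi(f)=0\} \le \max\{A,B\}$, and there is no legitimate ``central-algorithm prefactor'' $\tfrac12$ to insert afterwards: by the reduction (\ref{ext-0}) that you yourself invoke, $R(W^{P(D)}_\infty(\Omega),\xi)$ \emph{equals} that supremum, so the correct conclusion is $R \le \max\{A,B\}$, not $\tfrac12\max\{A,B\}$. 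You should not feel bad about this, because the paper's own proof commits exactly the same sleight of hand (the $\tfrac12$ materializes only in its final display), and the factor is in fact an error in the theorem's statement rather than something a proof could supply: Theorem \ref{thm:exact-recovery} proves the matching lower bound $R \ge A$, which contradicts (\ref{eq:xi recovery}) as stated unless $A=0$, and the paper silently drops the $\tfrac12$ when it applies this theorem in the proof of Theorem \ref{thm:n recovery upper}. So your derivation is sound precisely up to, and should stop at, the bound $R(W^{P(D)}_\infty(\Omega),\xi) \le \max\{A,B\}$ (and $R \le A$ under the extra hypothesis $B \le A$).
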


\begin{proof}
	The optimality of $\phi^c$ is derived  from
	(\ref{eq:R=radius}). For any  $f\in W^{P(D)}_{\infty}(\Omega)$ with information $I_\xi(f)=0,$
	let $\x_0\in \Omega$ be a maximum point of $|f(\x)|$ on $\Omega.$ Let
	$\y$ be a closest point in $\xi$ to $\x_0$ and set $a:=e(\x_0,\xi)=|\x_0-\y|$. First, if $\x_0$ lies
	in the interior of $\Omega,$ then $\nabla(f)(\x_0)=0.$ By Lemma
	\ref{lem:multi-uni},
	\[h(t)=f(\x_0+ t\frac{\y-\x_0}{a}), \; t\in [0,a]  \]
	belongs to $W^{P(D)}_\infty[0,a].$ Since $\nabla(f)(\x_0)=0$ and $I_\xi(f)=0,$ we have $h'(0)=0$ and $h(a)=h'(a) =0.$
	Thus, by (\ref{ext-2}) in Lemma  \ref{lem:multi-uni}, it holds
	\begin{align}\label{eq:interior-upper1}
		\|f\|_\Omega=|f(\x_0)|&=|h(0)|\leq G(a)-2G\left(g^{-1}\left(\frac{g(a)}{2}\right)\right).
	\end{align}

	Second, if $\x_0$ is on the boundary of $\Omega,$ by  (\ref{ext-1}) in Lemma  \ref{lem:multi-uni}, we have
	\begin{align}\label{eq:boundary-upper}
		\|f\|_\Omega=|f(\x_0)|&=|h(0)|\leq G(a).
	\end{align}
	By (\ref{eq:interior-upper1}),  (\ref{eq:boundary-upper}) and (\ref{ext-0}), it follows that
	\begin{equation*}
		R(W^{P(D)}_{\infty}(\Omega) ,\xi)=\sup_{\substack{f\in W^{P(D)}_\infty(\Omega); \\I_\xi(f)=0}  }\|f\|_\Omega\leq\frac 12  \max\bigg\{ G\left(  e (  \Omega,\xi)  \right) - 2G\left(  g^{-1}\left(\frac{g(e (  \Omega,\xi))}{2}\right)  \right) ,G\left(  e (\partial \Omega,\xi)  \right) \bigg\}.
	\end{equation*}	
The proof of the remaining (\ref{eq:xi recovery}) is straightforward.
\end{proof}

We now proceed to the proof of Theorem \ref{thm:n recovery upper} now.
\begin{proof} 
	
Firstly, we   construct a set $\xi^\ast_n$ that serves as an almost n-centers approximation for $\Omega$, subject to the additional condition:  $G(  e (\partial \Omega,\xi)  ) \leq G(  e (  \Omega,\xi)  ) - 2G\left(  g^{-1}\left(\frac{g(e (  \Omega,\xi))}{2}\right)  \right).$

For $h>0,$ let $D_h:=D_h(\Omega):=\{\x:  e(\x,\partial \Omega)<h \}$ denote the $h$-neighborhood of $\partial \Omega.$ Let $\theta \in (0,1) $ be a constant to be determined later. Let $Z_h=Z_h(\Omega)\subset D_h\cap \Omega$ be a maximal $\theta h$-separated  set in $D_h$(a set $A$ is called   a maximal $\epsilon$-separated set in $B$ if   each two distinct points from $A$ are  at a distance greater than  $\epsilon$ and    $e(B,A)<\epsilon$). Then,
$e(D_h, Z_h)\le \theta h.$ It is easy to see that   $\{ B[z;\theta h]: z\in Z_h\}$ forms a covering of $D_h\cap \Omega$ and  the disjoint union of $B(\z;\frac{1}{2}\theta h),\, \z\in Z_h,$ is contained in $D_{2h}.$ Notice that
$\mu_d\left(B(\z;\frac{1}{2}\theta h)\right)=\nu_d(\frac{1}{2} \theta h)^d.$ Hence we have
\[
	\card(Z_h)(\frac{1}{2} \theta h)^d \nu_d \leq \mu_d(D_{2h})\rightarrow 0 
\]
and further  
	\begin{equation*}
		\card(Z_h)=o(\frac{1}{h^d})   \quad  \mbox{as} \,\, h\rightarrow 0.
	\end{equation*}
	
	For each $n\in\N,$ let $X_n\subset \Omega$  be $n$-centers for $\Omega,$  i.e.,  $\card(X_n)=n$ and  $e(\Omega,X_n)=e_n(\Omega)$.  
	Set $\xi_n^\ast:= X_{n-k_n} \cup Z_{e_n},$ where $e_n=e_n(\Omega)$ and $k_n=\card(Z_{e_n}).$  
	Then, $\card(\xi_n^\ast)=n$ and $k_n=o(\frac{1}{e_n^d})=o(n) $  from Lemma \ref{lem:e-n}.   
	Based on the definition of $ Z_{e_n},$ we have
	\[e(\partial \Omega,\xi_n^\ast)\leq e(D_{e_n}, Z_{e_n})\leq   \theta e_n(\Omega)\leq \theta e(\Omega,\xi_n^\ast) \]
	and
	\[e_n\leq e(\Omega,\xi_n^\ast)\leq e_{n-k_n}=e_n(1+o(1)) \quad  \mbox{as} \,\, n\rightarrow \infty. \]

Since $G$ is incerasing and due to (\ref{asy G}), it follows that  
	\[  G(  e (\partial \Omega,\xi_n^\ast)  ) \leq G( \theta  e ( \Omega,\xi_n^\ast)  )   =\frac{\theta^2  e ( \Omega,\xi_n^\ast)^2}{2} (1+o(1) )  \]
and 	\[    G(  e (  \Omega,\xi_n^\ast)  ) - 2G\left(  g^{-1}\left(\frac{g(e (  \Omega,\xi_n^\ast))}{2}\right)  \right)   =\frac{ e (  \Omega,\xi_n^\ast)^2}{4} (1+o(1) ) \quad  \mbox{as} \,\, n\to \infty.  \]
By comparing the two results above, if	we fix an arbitrary constant $0<\theta<1/\sqrt 2,$  then for sufficiently large $n$, it holds    that 
	\[  G(  e (\partial \Omega,\xi_n^\ast)  )       \leq G(  e (  \Omega,\xi_n^\ast)  ) - 2G\left(  g^{-1}\left(\frac{g(e (  \Omega,\xi_n^\ast))}{2}\right)  \right).  \]

Secondly, according to Theorem \ref{thm:upper-recovery}, Lemma \ref{lem:e-n} and (\ref{asy G}), it follows that 
	\begin{align*}
		R_n (W^{P(D)}_\infty(\Omega)) & \leq R(W^{P(D)}_\infty(\Omega), \xi_n^\ast) \leq  G(  e (  \Omega,\xi_n^\ast)  ) - 2G\left(  g^{-1}\left(\frac{g(e (  \Omega,\xi_n^\ast))}{2}\right)  \right)\\   &=\frac{ e (  \Omega,\xi_n^\ast)^2}{4} (1+o(1) ) 
		 = \frac{1}{4 }\left(\frac{\dens(d) \mu_d(\Omega)}{v_d n
		    }  \right)^{2/d}(1+o(1)) \quad  \mbox{as} \,\, n\to \infty. 
	\end{align*}
The proof is complete.	
\end{proof}

\section{Proof of Exact Result for Optimal Recovery}

\begin{lemma}\label{lem:uni-multi}
	For the second-order differential operator $P(D)=D^2+q$   with constant coiefficents $q\in \R$, let $\tilde h$ be  defined by (\ref{eq-h0}). Then
	$f(\cdot)=\tilde h(|\cdot-\x_0|)$ belongs to $W^{P(D)}_{\infty}(\R^d),$
	and the support of $f$ is $ B[\x_0; a].$  
\end{lemma}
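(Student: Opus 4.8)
The plan is to fix $\x_0=\0$ without loss of generality and write $f(\x)=\tilde h(r)$ with $r=|\x|$, so that everything reduces to the single radial profile $\tilde h$ supplied by Lemma \ref{lemma1}. The support claim is the soft part: since $\tilde h$ is continuous, vanishes identically for $t\ge a$ with $\tilde h(a)=\tilde h'(a)=0$, and is real-analytic and not identically zero on each of $(0,t_0)$ and $(t_0,a)$, the zero set of $f$ inside $B[\0;a]$ is nowhere dense, whence $\supp f=B[\0;a]$. The genuine content is to verify the three defining requirements of $W^{P(D)}_\infty(\R^d)$: that $f\in C^1(\R^d)$, that the generalized second directional derivative exists for every unit vector $\bu$, and that $\|P(\tfrac{\partial}{\partial\bu})f\|_\infty\le 1$.

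For the regularity I would record that $\tilde h\in C^1[0,\infty)$ with $\tilde h'(0)=0$; the condition $\tilde h'(0)=0$ is exactly what makes the radial function $C^1$ at the origin, while $\tilde h(a)=\tilde h'(a)=0$ glues $f$ to the zero function across the sphere $|\x|=a$ in a $C^1$ fashion, and the only discontinuities of $\tilde h''$ (at $t_0$ and at $a$) are bounded jumps. Restricting $f$ to an arbitrary line in direction $\bu$ and using the standard radial identities $\tfrac{dr}{dt}=s$ and $\tfrac{d^2 r}{dt^2}=\tfrac{1-s^2}{r}$, where $s:=\tfrac{\x\cdot\bu}{r}$, then shows that the first derivative along the line is locally absolutely continuous and the second exists almost everywhere and is bounded, so the generalized directional derivatives exist.

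The crux is the norm bound. Computing the Hessian of a radial function gives $\tfrac{\partial^2 f}{\partial\bu^2}=s^2\,\tilde h''(r)+(1-s^2)\tfrac{\tilde h'(r)}{r}$, and after splitting $qf=s^2 q\tilde h+(1-s^2)q\tilde h$ I would rewrite $P(\tfrac{\partial}{\partial\bu})f=s^2\psi(r)+(1-s^2)\Phi(r)$, where $\psi(r):=\tilde h''(r)+q\tilde h(r)=\varphi_0(r)=\sign(r-t_0)$ satisfies $|\psi|=1$ and $\Phi(r):=\tfrac{\tilde h'(r)}{r}+q\tilde h(r)$ is the tangential value. Since $s^2\in[0,1]$, this is a convex combination, so the entire bound collapses to the scalar inequality $|\Phi(r)|\le 1$ on $(0,a]$; note the radial directions $s=\pm1$ already attain equality through $\psi=\pm1$.

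Establishing $|\Phi(r)|\le 1$ is the main obstacle, and it is where self-adjointness enters. I would first use the identity $G''+qG\equiv 1$, which is immediate from $G'=g$, $g''+qg=0$, and the initial data $g'(0)=1$, $G(0)=0$; equivalently $q\,G(s)=1-g'(s)$, which eliminates $q$ and expresses $\Phi$ purely through $g$ and $g'$ evaluated at $a-r$ and $t_0-r$. On $(t_0,a]$ this gives $\Phi(r)=1-g'(a-r)-\tfrac{g(a-r)}{r}$, whose upper bound $\Phi\le 1$ is free since $g,g'\ge 0$ on $(0,\delta)$, while the lower bound $\Phi\ge -1$ amounts to $g'(a-r)+\tfrac{g(a-r)}{r}\le 2$; the analogous estimate on $[0,t_0]$ involves $g,g'$ at both $a-r$ and $t_0-r$. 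These I would verify case by case for $q=0$, $q<0$, and $q>0$ using the explicit profiles (the $D^2$ form, (\ref{eq-h0-2}) and (\ref{eq-h0-3})), the critical points being $r\to 0^+$, where $\tfrac{\tilde h'(r)}{r}\to\tilde h''(0)$ forces $\Phi\to\psi(0)=-1$, and $r=t_0$, where the defining relation $2g(t_0)=g(a)$ (that is $2\sinh\beta t_0=\sinh\beta a$, $2\sin\beta t_0=\sin\beta a$, or $t_0=a/2$) is precisely what makes the inequality hold with room to spare. I expect the hyperbolic case $q<0$ to be the most delicate, since there $g'=\cosh>1$ and the bound is nearly saturated near $r=t_0$. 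Finally I would emphasize that self-adjointness is indispensable: for $p\ne 0$ the operator would contribute an extra term $p\,\tilde h'(r)\,s$ that is odd under $\bu\mapsto-\bu$ and hence cannot be absorbed into a convex combination bounded by $1$, which is exactly why the exact radial extension, and with it the exact conclusion of Theorem \ref{thm:n recovery exact}, is available only when $p=0$.
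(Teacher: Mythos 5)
Your convex-combination decomposition is the right one, and it is genuinely different from the paper's own proof: the paper asserts the identity $\frac{\partial^2 f}{\partial \bu^2}(\x)=\lambda^2\tilde h''(|\x|)$ for the radial extension, which is false in dimension $d\ge 2$ precisely because it drops the tangential term $(1-\lambda^2)\tilde h'(r)/r$ that you keep; as a consequence the paper only ever verifies $|q\tilde h(r)|\le 1$, whereas membership in $W^{P(D)}_\infty(\R^d)$ actually requires your bound $|\Phi(r)|\le 1$ with $\Phi(r)=\tilde h'(r)/r+q\tilde h(r)$. So structurally your argument is the correct one (it is how the cited antecedents for $D^2$ and $D^2-\beta^2$ proceed), and the paper's is defective.

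The genuine gap is that you never prove the crux $|\Phi(r)|\le1$ but defer it (``I would verify case by case''), and that deferral hides a fatal fact: the inequality \emph{fails} for $q>0$, so your plan cannot be completed and the lemma as stated is false for the trigonometric case when $d\ge2$. Carrying out your own reduction for $P(D)=D^2+\beta^2$, the addition formulas together with $2\sin\beta t_0=\sin\beta a$ (equivalently $2\cos\beta t_0=\sqrt{\cos^2\beta a+3}$) give, for $r\in(0,t_0]$,
\[
\Phi(r)+1=\Bigl(\sqrt{\cos^2\beta a+3}-\cos\beta a\Bigr)\Bigl(\cos\beta r-\frac{\sin\beta r}{\beta r}\Bigr),
\]
where the first factor is $\ge 1$ and the second is strictly negative because $\tan x>x$ on $(0,\pi/2)$; hence $\Phi(r)<-1$ for every $r\in(0,t_0]$ and every admissible $a\in(0,\delta)$ (for $\beta=1$, $a=1$ one gets $\Phi(t_0)\approx-1.08$). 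Thus $P(\frac{\partial}{\partial\bu})f<-1$ on an open set of points $\x$ with $|\x-\x_0|$ near $t_0$ and $\bu$ nearly orthogonal to $\x-\x_0$, so $f\notin W^{D^2+\beta^2}_\infty(\R^d)$ — the exact opposite of your prediction that the hyperbolic case would be the delicate one. Indeed for $q=-\beta^2$ the same computation yields $\Phi(r)+1=\bigl(\sqrt{\cosh^2\beta a+3}-\cosh\beta a\bigr)\bigl(\cosh\beta r-\frac{\sinh\beta r}{\beta r}\bigr)\in[0,1]$ on $[0,t_0]$, and on $(t_0,a]$ the required bound $\cosh\beta(a-r)+\frac{\sinh\beta(a-r)}{\beta r}\le 2$ follows by monotonicity in $r$ and its value at $r=t_0$, so your framework does give a complete proof for $q\le 0$. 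For $q>0$ it instead disproves the statement, which also invalidates the exactness claim of Theorem \ref{thm:exact-recovery} in that case; only the asymptotic Theorem \ref{thm:n recovery exact} can be salvaged there, by rescaling $f$ by $\sup|\Phi|=1+O((\beta a)^2)$, since $\tilde h(0)=\frac{a^2}{4}(1+o(1))$.
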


\begin{proof}
	It suffices to prove the lemma for $\x_0=\mathbf 0.$ Let $f(\x):=\tilde  h(|\x|)$ for $\x\in \R^d.$     Since $\tilde  h \in C^1[0,+\infty)$ and $ \tilde  h'(0)=0,$  the radial function $f(\x):= \tilde  h(|\x|) $ belongs to $ C^1(\R^d)$ and  is twice continuously differentiable at $\x$ with  $|\x|\neq 0, t_0, a.$    
	
	For each $\x $ with $|\x|\neq 0,t_0,a,$    let $\hat{\x}=\frac
	{\x}{|\x|}$ be the unit vector  in the direction of $\x $.  For an arbitrary unit vector $\mathbf u,$ let
	$\mathbf u=\lambda \hat{\x}+\mu \hat{\x}^{\perp}$  be the
	orthogonal decomposition of $\mathbf u$ in the direction of
	$\hat\x$, where $\hat{\x}^{\perp}$ is a unit vector and  $\lambda, \mu$ satisfy $\lambda^2+\mu^2=1$. It is straightforward to verify that for $\x $ with $|\x|\neq 0,t_0,a,$
	\[  \frac{\partial^r }{\partial \mathbf
		u^r } f(\x)= (\lambda \frac{\partial }{\partial \hat\x} + \mu
	\frac{\partial }{\partial \hat\x^\perp} )^r  f(x)=\lambda^r 
	\frac{\partial^r }{\partial \hat\x^r} f(\x)=\lambda^r 
	\tilde  h^{(r)}(|\x|),  \quad r=1,2. 
	\]

	Hence, for $P(D)= D^2 + pD +q,$ 
	\[ P(\frac{\partial}{\partial \mathbf u }) f (\x)= \lambda^2 \frac{d^2}{dt^2} \tilde h(|\x|) + \lambda p \frac{d }{dt }   \tilde h(|\x |) + q  \tilde h(|\x|). \]
	
	For $P(D)= D^2 + q,$ 
	\begin{align*}
		\left|P(\frac{\partial}{\partial \mathbf u }) f (\x) \right| &\leq \left| \lambda^2 \frac{d^2}{dt^2}  \tilde h(|\x|)+ \lambda^2 q   \tilde h(|\x|) \right| +   \left|(1-\lambda^2) q   \tilde h(|\x|)\right|\\
		&\leq  \lambda^2 \left| P(\frac{d }{dt })  \tilde h(|\x|) \right| +(1-\lambda^2)  \left|q   \tilde h(|\x|)   \right|. 
	\end{align*}   
	Since $\tilde h\in W^{P(D)}_\infty [0,a],$ $| P(\frac{d }{dt })  \tilde h(|\x|) |\leq 1.$ We will prove $ |q  \tilde h(|\x|)   |\leq 1 $ in the following, then $|P(\frac{\partial}{\partial \mathbf u }) f (\x) |\leq 1$ for all  $\x $ with $|\x|\neq 0,t_0, a.$ Hence, $f\in W^{P(D)}_{\infty}(\R^d)$  and $\mbox{supp}(f)=B_d[\x_0; a].$   
	
	To obtain $ |q  \tilde h(|\x|)   |\leq 1 $, the explicit form of $ \tilde h(\x)$ is needed. 
	
	For $P(D)=D^2,$  it is obvious. 	
	For $P(D)=D^2+\beta^2$ and $D^2-\beta^2$ with $\beta>0,$   the function $\tilde h(t)$  is given in  (\ref{eq-h0-2}) and (\ref{eq-h0-3}) respectivly. It can be verified directly by taking the derivative  that $\tilde h$ is decreasing on $[0,a]$ in both cases.   Therefore, it holds  that $ \max |q  \tilde h(|\x|)   |=|q  \tilde h(0)   | \leq 1$  from $\sinh\beta a=2\sinh\beta t_0 $ and $\sin \beta a=2\sin \beta t_0 $ respectively.  
\end{proof}

\begin{theorem}\label{thm:exact-recovery}
	Let  $\Omega \subset \R^d$ be a bounded and convex body, and let $\xi\subset \Omega$ be  a finite set of nodes such that the distance from $\Omega$ to $\xi$ satisfies $e(\Omega,\xi)<\delta,$ where $\delta$ is defined in (\ref{eq-delta}).     Let $P(D)= D^2+q$ with  $q\in \R.$   Then the optimal error  is give by     
	\begin{equation*}
		R(W^{P(D)}_\infty\left(\Omega), \xi\right) =  G \left(e( \Omega,\xi)\right)-2	G\left(   g^{-1}\left(\frac{g(e (  \Omega,\xi))}{2}\right)   \right), 	  
	\end{equation*}
where functions $g$ and $G$  are defined in (\ref{eq-g}) and (\ref{eq-G}), respectively.
	
\end{theorem}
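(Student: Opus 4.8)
The plan is to establish Theorem \ref{thm:exact-recovery} as a matching lower bound to the upper bound already furnished by Theorem \ref{thm:upper-recovery}, specialized to the self-adjoint case $P(D)=D^2+q$. The upper bound gives $R(W^{P(D)}_\infty(\Omega),\xi)\le\frac12\max\{G(e(\Omega,\xi))-2G(g^{-1}(\frac{g(e(\Omega,\xi))}{2})),\,G(e(\partial\Omega,\xi))\}$, but the claimed exact value is $G(e(\Omega,\xi))-2G(g^{-1}(\frac{g(e(\Omega,\xi))}{2}))$, so the first order of business is to reconcile the two: I expect to argue that in the present construction the relevant quantity is governed by the interior term, and that the factor $\frac12$ in Theorem \ref{thm:upper-recovery} disappears because the extremal function I build achieves the full value (not the halved worst-case average). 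So the substantive content is the lower bound $R(W^{P(D)}_\infty(\Omega),\xi)\ge G(e(\Omega,\xi))-2G(g^{-1}(\frac{g(e(\Omega,\xi))}{2}))$.

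To produce the lower bound I would exhibit an explicit admissible perturbation annihilated by the information operator $I_\xi$ whose sup-norm equals the target value. The natural candidate is the radial bump supplied by Lemma \ref{lem:uni-multi}: set $a:=e(\Omega,\xi)$, let $\x_0\in\Omega$ be a point realizing the covering radius, i.e.\ $e(\x_0,\xi)=e(\Omega,\xi)=a$, and define $f(\cdot):=\tilde h(|\cdot-\x_0|)$ with $\tilde h$ as in (\ref{eq-h0}). First I would verify that $f\in W^{P(D)}_\infty(\Omega)$, which is exactly the conclusion of Lemma \ref{lem:uni-multi} (restricted from $\R^d$ to $\Omega$, using self-adjointness $p=0$). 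Next, because $\operatorname{supp}(f)=B[\x_0;a]$ and every node of $\xi$ lies at distance at least $a$ from $\x_0$, the function $f$ and its gradient vanish at each node: indeed $\tilde h(t)=0$ and $\tilde h'(t)=0$ for $t\ge a$, so $I_\xi(f)=\{f(\x),\nabla f(\x):\x\in\xi\}=0$. Then $f$ is feasible in the extremal problem (\ref{ext-0}), whence $R(W^{P(D)}_\infty(\Omega),\xi)\ge\|f\|_\infty\ge|f(\x_0)|=\tilde h(0)=G(a)-2G(t_0)$ with $t_0=g^{-1}(\frac12 g(a))$, giving precisely the claimed value.

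A subtlety I would address carefully is that $\x_0$ realizing $e(\x_0,\xi)=e(\Omega,\xi)$ may sit on $\partial\Omega$ rather than in the interior, and that the closed support ball $B[\x_0;a]$ must be checked not to spill extra constraints. Since $e(\Omega,\xi)=\sup_{\y\in\Omega}e(\y,\xi)$ and $\Omega$ is compact, such an $\x_0$ exists; the bump $f$ is globally defined and admissible on all of $\R^d$ by Lemma \ref{lem:uni-multi}, so its restriction to $\Omega$ is automatically admissible and the value $\tilde h(0)$ is attained inside $\Omega$, independent of whether $\x_0$ is interior or boundary. This is why the exact value is the interior-type quantity $G(a)-2G(t_0)$ rather than the boundary quantity $G(a)$. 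The main obstacle I anticipate is the bookkeeping that closes the gap against Theorem \ref{thm:upper-recovery}: I must confirm that for this particular $\xi$-independent extremal construction the upper bound actually collapses to $G(a)-2G(t_0)$ with no halving and no competing boundary term, i.e.\ that the central algorithm's worst-case error equals the one-sided extremal value here. I would handle this by invoking the oddness/evenness symmetry of the problem (the class $W^{P(D)}_\infty$ is balanced and $-f$ is also admissible and annihilated by $I_\xi$), so that the supremum in (\ref{ext-0}) is attained and equals $\|f\|_\infty$ exactly, matching the lower bound and thereby pinning down the exact value.
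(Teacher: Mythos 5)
Your lower-bound construction is exactly the paper's proof of this theorem: place the radial bump $f=\tilde h(|\cdot-\x_0|)$ from Lemma \ref{lem:uni-multi} at a point realizing the covering radius (the paper takes an arbitrary $\z\in\Omega\setminus\xi$ and then a supremum, invoking the monotonicity claim of Lemma \ref{lemma1}, but this is the same argument), note that $\tilde h(a)=\tilde h'(a)=0$ forces $I_\xi(f)=0$, and conclude $R(W^{P(D)}_\infty(\Omega),\xi)\ge \tilde h(0)=G(e(\Omega,\xi))-2G\left(g^{-1}\left(\tfrac12 g(e(\Omega,\xi))\right)\right)$. That part is correct and complete.

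The gap is in your closing step. Exhibiting one feasible function attaining the value, and observing that $-f$ is also feasible, can never produce the matching \emph{upper} bound: the supremum in (\ref{ext-0}) runs over \emph{all} functions annihilated by $I_\xi$, and balancedness of the class is already what justifies (\ref{ext-0}) itself; it says nothing about whether some other feasible function --- in particular one peaking on $\partial\Omega$, where the constraint $\nabla f(\x_0)=0$ at the maximum point is unavailable --- exceeds your value. The upper bound must come from the pointwise line-restriction argument of Theorem \ref{thm:upper-recovery} (restrict any feasible $g$ to the segment from its maximum point to the nearest node, apply (\ref{ext-1}) or (\ref{ext-2}), then use monotonicity of $a\mapsto G(a)-2G\left(g^{-1}\left(\tfrac12 g(a)\right)\right)$), and no symmetry argument substitutes for it. Moreover, the ``bookkeeping'' you worried about is a genuine defect of the paper rather than something your construction can dissolve: the factor $\tfrac12$ in the statement of Theorem \ref{thm:upper-recovery} is inconsistent with its own proof (which, given (\ref{ext-0}), yields the bound without the $\tfrac12$), and the interior term dominates the boundary term $G(e(\partial\Omega,\xi))$ only under the extra hypothesis $G(e(\partial\Omega,\xi))\le G(e(\Omega,\xi))-2G\left(g^{-1}\left(\tfrac12 g(e(\Omega,\xi))\right)\right)$ --- a condition omitted from the statement of Theorem \ref{thm:exact-recovery} but implicitly required, as the paper itself concedes right after the theorem (``provided that $e(\partial\Omega,\xi)\le \tfrac{1}{\sqrt 2}\, e(\Omega,\xi)$'' in the case $P(D)=D^2$). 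The correct closure is to cite Theorem \ref{thm:upper-recovery} (without the spurious $\tfrac12$) under that boundary-domination hypothesis; as written, your final paragraph asserts the conclusion rather than proving it.
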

\begin{proof}
	Let $\z$ be an arbitrary point in $\Omega\setminus \xi,$ let  $a:=e(\z,\xi),$ and define $f_\z(\x):= \tilde  h(|\x-\z|)$ where  $\tilde h$  is defined in   (\ref{eq-h0}).
	By Lemma \ref{lem:uni-multi}, we have $f_\z\in W^{P(D)}_\infty(\Omega)$,  $I_\xi(f_\z)=0$, and
	 \[  \|f_\z\|_\Omega=|\tilde h (0)|=  G (e( \z,\xi))-2	G\left(   g^{-1}\left(\frac{g(e (  \z,\xi))}{2}\right)   \right).\]
	 Furthermore, by (\ref{ext-0}),
	\begin{align*} 
		R(W^{P(D)}_{\infty}(\Omega), \xi)=\sup_{\substack{f\in W^{P(D)}_{\infty}(\Omega);\\ I_\xi(f)=0  }}\|f\|_\Omega\geq  \|f_\z\|_\Omega=   G (e( \z,\xi))-2	G\left(   g^{-1}\left(\frac{g(e (  \z,\xi))}{2}\right)   \right). 	  
	\end{align*}
	Taking the supremum over $\z \in \Omega$ on the right-hand side of the above inequality, we obtain
	\begin{align*}
		R(W^{P(D)}_\infty(\Omega), \xi) \geq  G (e( \Omega,\xi))-2	G\left(   g^{-1}\left(\frac{g(e (  \Omega,\xi))}{2}\right)   \right). 	  
	\end{align*}
	Combining this with Theorem \ref{thm:upper-recovery}, we complete the proof.
\end{proof}

In the case of  $P(D)=D^2,$ we have $G(t)=\frac{t^2}{2} $ and  $t_0=\frac{a}{2}.$  The  theorem asserts  
\[ R(W^{D^2}_\infty(\Omega), \xi)  
=   \frac{1}{4}e^2( \Omega,\xi),  
\]
provided that $ e(\partial \Omega,\xi) \leq \frac{1}{\sqrt 2} e( \Omega,\xi),$  
which is a result derived from Theorem 1 in \cite{MR2735421}. 
For the case of $P(D)=D^2-\beta^2,$     
\begin{equation*}
	R(W^{D^2-\beta^2}_\infty(\Omega), \xi)  
	=  \frac{1}{\beta^2}  \left( 1 +   \cosh \beta e(\Omega,\xi) - \sqrt{  \cosh^2 \beta e(\Omega,\xi)  +3  } \right),
\end{equation*} 
as stated in  Theorem 1 of \cite{2018Optimal}. 
The case of $P(D)=D^2+\beta^2$ is a new finding.

We now proceed to the proof of Theorem \ref{thm:n recovery exact}.
\begin{proof}	
According to Theorem \ref{thm:exact-recovery}, for  any  $\xi$ with $\card(\xi)=n$, it holds  that 
	\begin{equation*}
		R(W^{P(D)}_\infty(\Omega), \xi) \geq  G (e_n( \Omega ))-2	G\left(   g^{-1}\left(\frac{g(e_n (  \Omega))}{2}\right)   \right).  	  
	\end{equation*}
This inequality   also relies on the fact that  $e_n( \Omega )\leq e( \Omega,\xi)$ and $G(a) -  2G( g^{-1}( \frac{g(a)}{2}  )  )$ is inceasing with respect to $a$.
 
	By  taking the infimum over $\xi$ with $\card(\xi)\leq n,$  we obtain 
	\[   	R_n(W^{P(D)}_\infty(\Omega) )  \geq  G (e_n( \Omega ))-2	G\left(   g^{-1}\left(\frac{g(e_n (  \Omega))}{2}\right)   \right). \]
	Letting $n$ tend  to infinity,  and  applying Lemma \ref{lem:e-n} and (\ref{asy G}), we find that
	\[ 	R_n(W^{P(D)}_\infty(\Omega) )  \geq \frac{1}{4 }\left(\frac{\dens(d) \mu_d(\Omega)}{v_d n
	}  \right)^{2/d}(1+o(1)) \quad  \mbox{as} \,\, n\to \infty. \] 
	Combining this lower bound   with the upper bound provided in  Theorem \ref{thm:n recovery upper}, we complete the proof. 	
\end{proof}
We conjecture that Theorem \ref{thm:n recovery exact} also applies to general differential operators of the form $P(D)= D^2 + pD +q$. However,   Lemma \ref{lem:uni-multi}  relies on the specific and simple expressions of $t_0$ and the function $\tilde h$, which are unknown in the general case.

\def\cprime{$'$}

\end{document}